\pgfplotsset{compat=1.7}
\newcommand\blfootnote[1]{%
  \begingroup
  \renewcommand\thefootnote{}\footnote{#1}%
  \addtocounter{footnote}{-1}%
  \endgroup
}
\DeclareMathOperator{\des}{des}
\DeclareMathOperator{\maj}{maj}
\DeclareMathOperator{\Des}{Des}
\DeclareMathOperator{\std}{std}
\DeclareMathOperator{\inv}{inv}
\DeclareMathOperator{\Mat}{Mat}
\DeclareMathOperator{\stat}{stat}
\DeclareMathOperator{\vol}{vol}
\DeclareMathOperator{\Ehr}{Ehr}
\DeclareMathOperator{\Lp}{L}
\DeclareMathOperator{\wt}{wt}
\DeclareMathOperator{\conv}{conv}
\newcommand\cp{\scaleobj{1.7}{\diamond}}
\newcommand\mboxdot{\scaleobj{1.13}{\boxdot}}
\newcommand{\Conv}{\mathop{\scalebox{1.5}{\raisebox{-0.2ex}{$\ast$}}}}
\def\0{\hspace{-.12em}0}
\definecolor{dgreen}{HTML}{03297A}
\definecolor{dgreen}{HTML}{0B3B17}
\providecommand{\keywords}[1]
{
  \small	
  \textbf{\textit{Keywords---}} #1
}
  \theoremstyle{plain}
  \newtheorem{thm}{Theorem}[section]
  \newtheorem*{Atheorem}{Theorem~A}
  \newtheorem*{Btheorem}{Theorem~B}  
  \newtheorem{lem}[thm]{Lemma}
  \newtheorem{prop}[thm]{Proposition}
  \newtheorem{cor}[thm]{Corollary}
  \newtheorem{con}[thm]{Conjecture}
  \theoremstyle{remark}
  \newtheorem{rem}[thm]{Remark} 
  \newtheorem{pro}[thm]{Problem}
  \theoremstyle{definition}
  \newtheorem{dfn}[thm]{Definition}
  \newtheorem{exa}[thm]{Example}
  \newtheorem{ass}[thm]{Assumption}
  \numberwithin{thm}{section}
  \numberwithin{equation}{section}
\author[E.~Tielker]{Elena Tielker}
\address{Fakult\"at f\"ur Mathematik, Universit\"at Bielefeld, D-33501 Bielefeld, Germany
}
\email{etielker@math.uni-bielefeld.de}
 \title{Weighted Ehrhart series and a type-$\mathsf{B}$ analogue of a formula of MacMahon} 
 \subjclass{05A05, 52B11}
 \keywords{descents, signed multiset permutations, coloured multiset permutations, Ehrhart theory, $h^*$-polynomials, $q$-analogues}
\begin{document}
\begin{abstract}We present a formula for a generalisation of the Eulerian polynomial, namely the generating polynomial of the joint distribution of major index and descent statistic over the set of signed multiset permutations. It has a description in terms of the ${h^*\text{-polynomial}}$ of a certain polytope. Moreover, we associate a family of polytopes to (generalised) Eulerian polynomials of types~$\mathsf{A}$ and~$\mathsf{B}$. Using this connection, properties of the generalised Eulerian numbers of types~$\mathsf{A}$ and~$\mathsf{B}$, such as palindromicity and unimodality, are reflected in certain properties of the associated polytope.
We also present results on generalising the connection between descent polynomials and polytopes to coloured (multiset) permutations.
  \end{abstract} 
 
\maketitle
\thispagestyle{empty}

\tableofcontents

\section{Introduction}\blfootnote{An extended abstract \cite{Tielker} of this paper has been accepted in \mbox{FPSAC} 2022.}

This paper develops a relationship between some well-known permutation statistics and lattice point enumeration in polytopes.
A classical instance of this relation is 
\begin{equation}\label{MacMahonFormulaCube}
\frac{d_{S_n}(t)}{(1-t)^{n+1}} = \sum_{k\geq 0} (k+1)^n t^k,
\end{equation}
where $d_{S_n}(t)$ is the $n$th Eulerian polynomial.
This polynomial is described in terms of descents on the elements in the symmetric group $S_n$ (cf.\ Section~\ref{section:submultisetperm} for $\eta=(1,\dots ,1)$), while $(k+1)^n$ is the number of lattice points in the $k$th dilate of the $n$-dimensional unit cube, i.e.\ its generating function is the Ehrhart series of the cube.

The left hand side of \eqref{MacMahonFormulaCube} can be generalised in several directions:
\begin{itemize}
\item[(i)] by repeating letters we obtain multiset permutations,
\item[(ii)] instead of $S_n$ we may consider descents over the hyperoctahedral group $B_n$, a Coxeter group of type~$\mathsf{B}$, or, yet more generally, coloured permutations,
\item[(iii)] in addition, by taking another statistic, the major index, into account we obtain a refinement of the number of descents.
\end{itemize}
In this paper we explore all three directions and develop a unifying perspective through the lens of generating functions:
the generalised polynomial (in the sense of (i)-(iii)) thus obtained can be interpreted in terms of $q$-analogues of Ehrhart series of certain polytopes which reflect the generalisations made on the permutation side.
For example, we interpret the generalisation of the Eulerian polynomial in (i) and (iii), which is known as a formula of MacMahon~\cite{MacMahon}, as follows: 
\begin{Atheorem}[MacMahon's formula of type~$\mathsf{A}$]\label{ThmA}
The generating polynomial of the joint distribution of the major index and descent statistic over the set of multiset permutations as in 
\begin{equation}\label{MacMahonFormula}
\frac{\sum_{w\in S_{\eta}} q^{\maj(w)} t^{\des(w)}}{\prod_{i=0}^n (1-q^it)} = \sum_{k\geq 0} \left( \prod_{i=1}^r \binom{k+\eta_i}{\eta_i}_q \right) t^k \in \mathbb{Q}(q,t),
\end{equation}
where $\eta=(\eta_1,\dots,\eta_r)$ is a composition of $n$ and $\binom{n}{k}_q$ the $q$-binomial coefficient, is the numerator of a $q$-analogue of the Ehrhart series of products of standard  simplices.
\end{Atheorem}
Theorem~\ref{MacMahonA} is a refined version of \eqref{MacMahonFormula} which provides a detailed description of the $q$-analogue of the Ehrhart series.
In the special case where $\eta = (1,\dots,1)$ and $q=1$, we obtain \eqref{MacMahonFormulaCube}, thus the Eulerian polynomial on the left hand side and the Ehrhart series of products of one-dimensional simplices, i.e.\ of the cube, on the right hand side.
Since $S_n$ is a Coxeter group of type~$\mathsf{A}$, we refer to \eqref{MacMahonFormula} as MacMahon's formula of type~$\mathsf{A}$.
Summing up, the type-$\mathsf{A}$ descent polynomials correspond to (standard) simplices.\\
The natural extension of \eqref{MacMahonFormula} to a type-$\mathsf{B}$ descent polynomial (viz.\ on $B_n$ or more general on its generalisation $B_{\eta}$, the set of signed multiset permutations) is via the polytope side. This is what we develop in the current paper. 
More precisely, we count (weighted) lattice points in cross polytopes, which can be seen as signed analogues of simplices.
Our main result verifies this relationship and further proves a refinement of this type-$\mathsf{B}$ extension which we therefore call MacMahon's formula of type~$\mathsf{B}$:
for this reason we give new definitions of the statistics major index and descent on the set of signed multiset permutations --- and therefore define a new generalisation of the Eulerian numbers of type~$\mathsf{B}$ --- and construct weight functions on the integer points in products of cross polytopes.
Our main result extends \eqref{MacMahonFormulaCube} in all three directions mentioned above at the same time:
\begin{Btheorem}[MacMahon's formula of type~$\mathsf{B}$]\label{ThmB}
The generating polynomial of the joint distribution of the major index and descent statistic over the set of signed multiset permutations as in 
\begin{equation}\label{MacMahonFormulaTypeB}
\frac{\sum_{w\in B_{\eta}}q^{\maj(w)}t^{\des(w)}}{\prod_{i=0}^n(1-q^i t)} = \sum_{k\geq 0} \left( \prod_{i=1}^r \sum_{j= 0}^{\eta_i} \left( q^{\frac{j(j-1)}{2}} \binom{\eta_i}{j}_q \binom{k-j+\eta_i}{\eta_i}_q \right)\right) t^k \in \mathbb{Q}(q,t)
\end{equation}
is the numerator of a $q$-analogue of the Ehrhart series of products of cross polytopes.
\end{Btheorem}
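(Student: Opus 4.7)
My plan is to generalise the Ehrhart-theoretic proof of Theorem~A (MacMahon's formula of type~$\mathsf{A}$) to the signed setting by interpreting both sides of (\ref{MacMahonFormulaTypeB}) as generating functions for weighted lattice points in products of cross polytopes. The argument splits into a reformulation of the right-hand side, a P-partition-style expansion of the left-hand side, and a weighted bijection tying the two together.

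First, I would reinterpret the inner factor $\sum_{j=0}^{\eta_i} q^{j(j-1)/2} \binom{\eta_i}{j}_q \binom{k-j+\eta_i}{\eta_i}_q$ as a $q$-weighted count of lattice points in the $k$th dilate of the $\eta_i$-dimensional cross polytope $\diamond_{\eta_i}$. Here $j$ records the number of strictly negative coordinates, the product $q^{j(j-1)/2}\binom{\eta_i}{j}_q$ is the Gaussian weight for choosing and placing them, and $\binom{k-j+\eta_i}{\eta_i}_q$ is the classical $q$-enumerator of weakly increasing tuples $0 \le a_1 \le \cdots \le a_{\eta_i} \le k-j$ recording the absolute values. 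Taking the product over $i$ realises the whole RHS as the weighted $q$-Ehrhart series of $P_\eta := \prod_{i=1}^r \diamond_{\eta_i}$.

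Second, I would expand the LHS via a P-partition style identity: for each $w \in B_\eta$ (with $\maj$ and $\des$ as defined earlier in the paper), the summand $q^{\maj w} t^{\des w}/\prod_{i=0}^n (1-q^i t)$ equals $\sum_{k \ge 0} t^k \sum_{\lambda} q^{|\lambda|}$, where $\lambda = (0 \le \lambda_1 \le \cdots \le \lambda_n \le k)$ ranges over weakly increasing sequences compatible with the descent structure of $w$. Summing over $w \in B_\eta$ and comparing coefficients of $t^k$, it remains to exhibit a weight-preserving bijection between the pairs $(w,\lambda)$ and lattice points in $kP_\eta$. Both sides decompose block-by-block according to the parts $\eta_i$: on the polytope side, a lattice point of $k\diamond_{\eta_i}$ splits into a sign pattern and a weakly increasing tuple of absolute values; on the permutation side, the signed skeleton of $w$ within the $i$th letter block furnishes the sign pattern, while the relevant subsequence of $\lambda$ furnishes the absolute values, after a standardisation analogous to the classical type-$\mathsf{A}$ one.

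The principal difficulty lies in the final weight-matching step: the shift $q^{j(j-1)/2}$ must arise naturally from the interaction of signs with the major index within each block, and compatibility across block boundaries---where descents between different absolute values meet sign changes---is delicate. The definitions of $\maj$ and $\des$ on $B_\eta$ introduced earlier are tailored precisely to produce this shift without correction terms, but verifying this explicitly, block by block and across the boundaries, will constitute the technical heart of the proof.
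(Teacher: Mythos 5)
Your overall architecture mirrors the paper's: the paper also proves the identity in two halves, showing (Part~2) that the middle expression is the weighted Ehrhart series of $\prod_i \cp_{\eta_i}$ via the orthant decomposition $k\cp_n\cap\mathbb{Z}^n=\bigcup_{J\subseteq[n]}(k\cp_n\cap Q^J)$ --- which is exactly your reading of $j$ as the number of negative coordinates, with the shift of each orthant contributing $q^{\binom{j}{2}}\binom{\eta_i}{j}_q$ --- and showing (Part~1) that the left-hand side expands over descent-compatible data, realised there as Gessel--Stanley barred permutations rather than $P$-partitions. Up to translation between bars and weakly increasing sequences, your two endpoints are the paper's two endpoints.

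The gap is that you stop exactly where the proof begins. You defer the ``weight-matching step'' as the ``technical heart,'' but that step contains the two ideas without which the theorem is not proved. First, you assert that the pairs $(w,\lambda)$ ``decompose block-by-block according to the parts $\eta_i$''; this is true on the polytope side but is not obvious on the permutation side, since the descent set of $w\in B_\eta$ couples letters from different blocks. The paper's mechanism for obtaining the product over $i\in[r]$ is a shuffle argument: a barred permutation with $k$ bars is built by inserting each letter class $\{\bar\imath,i\}$ \emph{independently} into a fixed row of $k$ bars, subject to the rules that no negative letter precedes the first bar and at most one negative copy of $i$ sits between consecutive bars; the weight then literally factors. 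Some such independence statement must be proved, not assumed. Second, the source of $q^{j(j-1)/2}$ on the permutation side is concrete in the paper: the $j$ negative copies of $i$ each force a (non-additional) bar, and these forced bars occupy the $0$th through $(j-1)$st spaces, contributing $q^{0+1+\cdots+(j-1)}t^{j}$, while the remaining $k-j$ additional bars contribute $\binom{k}{j}_q t^{k-j}$ via inversions; a $q$-binomial identity then converts $\binom{k}{j}_q\binom{\eta_i-j+k}{k}_q$ into $\binom{\eta_i}{j}_q\binom{k-j+\eta_i}{\eta_i}_q$. Your proposal names the shift but gives no mechanism producing it, and the claim that the paper's definitions of $\maj$ and $\des$ are ``tailored precisely to produce this shift without correction terms'' is precisely the assertion that needs proof (it hinges on $0\in\Des(w^\epsilon)$ iff $\epsilon(1)=-1$ and on equal negative letters creating descents). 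Until the bijection and its weight computation are written down, the argument is a restatement of the theorem in two languages rather than a proof.
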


\begin{figure}
\centering
\includegraphics[width=0.33\textwidth]{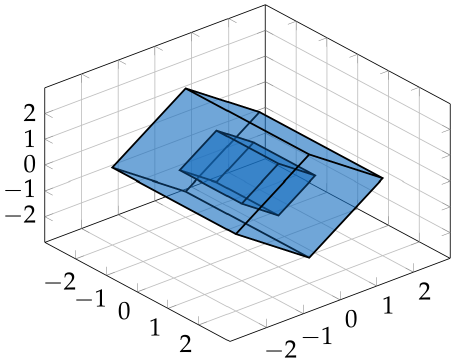}
\caption{The first and second dilate of the product of a one-dimensional and a two-dimensional cross polytope.}
\label{fig:dreidim}
\end{figure}

Considering MacMahon's formulae of types~$\mathsf{A}$ and~$\mathsf{B}$ from the polytope side, we give explicit descriptions of (a $q$-analogue of) the $h^*$-polynomials of two families of polytopes, namely products of standard simplices and products of cross polytopes, in Theorems~\ref{MacMahonA} and~\ref{MacMahonB}, respectively.
Moreover, the Ehrhart series \eqref{MacMahonFormulaTypeB} give rise to a so-called $q$-Ehrhart polynomial, which is a polynomial in the $q$-binomial coefficient $\binom{k}{1}_q$ and was introduced by Chapoton \cite{Chapoton}.
For example, for $\eta=(1,2)$ the generating function on the right hand side of \eqref{MacMahonFormulaTypeB} encodes the numbers of weighted lattice points in the $k$th dilates of the product of a one-dimensional and a two-dimensional cross polytope. For $k\in \{1,2\}$ the polytope is shown in Figure~\ref{fig:dreidim}. \\
A special case of Theorem~\hyperref[MacMahonFormulaTypeB]{B}, viz.\ Corollary~\ref{Cor_hypercubein0}, interprets the Eulerian numbers of type~$\mathsf{B}$ as the coefficients of the $h^*$-polynomial of a polytope and, more generally, the joint distribution of major index and descent over the hyperoctahedral group $B_n$ as a $q$-analogue of this $h^*$-polynomial. 
We refer to Section~\ref{section:MacMahonTypeB} for further details.\\
Using this connection between the Ehrhart series of certain polytopes and permutation statistics, palindromicity and unimodality of several generalisations of the Eulerian numbers can be seen via the polytope-theoretic counterpart, namely Gorenstein and anti-blocking polytopes; cf.\ Section~\ref{section:Properties}.\\
In Section~\ref{section:Generalisation}, we define coloured multiset permutations as generalisations of (signed) multiset permutations and a descent statistic on them 
which is equidistributed over the group of coloured permutations to descent statistics defined by \cite{BeckBraun} and \cite{Steingrimsson}. 
We discuss a potential candidate for a polytope corresponding to this set of permutations and further present partial results on the connection between Ehrhart series and descent statistics.

\subsection{Notation}
We write $\mathbb{N}= \{1,2,\dots \}$ and $\mathbb{N}_0= \{0,1,2,\dots \}$. For $n\in \mathbb{N}$ we write $[n]=\{ 1, \dots, n \}$ and $[n]_0=\{ 0, \dots, n \}$. 
For $q$ a variable and $k\in \mathbb{N}_0$ we denote by 
\begin{align*}
\binom{n}{k}_q = \frac{(1-q^n)\cdots (1-q^{n-(k-1)})}{(1-q) \cdots (1-q^k)} 
\end{align*}
the \emph{$q$-binomial coefficient} and further write
\begin{align*}
\binom{n}{k}_q & = \frac{[n]_q!}{[n-k]_q! [k]_q!}, \\
\text{where } [k]_q! &= [k]_q \cdots [2]_q [1]_q \\
\text{and } [k]_q &= \frac{1-q^k}{1-q}.
\end{align*}
We denote by $|S|$ the cardinality of a set $S$.
We write $\Mat_n(\mathbb{Z})$ for the set of $n\times n$-matrices over $\mathbb{Z}$.
Throughout, let $\eta=(\eta_1,\dots ,\eta_r)$ denote a composition of $n\in \mathbb{N}$ into $r$ parts, i.e.\ $\eta_i\in \mathbb{N}$ for all $i$ and $\eta_1 + \dots + \eta_r = n$.

\section{Preliminaries}\label{section:Preliminaries}

\subsection{Permutations statistics}\label{subsection:PermStat}

The numerator of the rational functions in \eqref{MacMahonFormula} and~\eqref{MacMahonFormulaTypeB} are described in terms of permutations statistics. In the following we recall the relevant definitions of statistics on multiset permutations and define new ones on signed multiset permutations.

\subsubsection{Multiset permutations} \label{section:submultisetperm}
A multiset permutation $w$ is a rearrangement of the letters of the multiset 
\begin{align*}
\{\{ \underbrace{1,\dots,1}_{\eta_1},\underbrace{2,\dots,2}_{\eta_2},\dots,\underbrace{r,\dots,r}_{\eta_r} \}\}.
\end{align*}
We write $w = w_1\cdots w_n$ (using the one-line notation) for such a permutation and denote by $S_{\eta}$ the set of all permutations of the multiset given by a composition $\eta =(\eta_1,\dots ,\eta_r)$ of $n$. 
The \emph{descent set} is defined to be $\Des(w) = \{i\in [n-1] : w_i > w_{i+1}\}$. Elements in $\Des(w)$ are called \emph{descents of $w$}. The \emph{major index} and the \emph{descent} statistic are 
\begin{align*}
\maj(w) = \sum_{i\in \Des(w)} i\quad \text{and}\quad \des(w) = |\Des(w)|.
\end{align*}
If, for example, $\eta = (2,3)$, then $w = 22121$ is a permutation of the corresponding multiset $\{\{1,1,2,2,2\}\}$. Here, $\Des(w) = \{2,4\}$ and therefore $\maj(w) = 6$ and $\des(w) = 2$.

\begin{dfn}\label{dfn:descentPolynomialS_eta}
We denote by 
\begin{align*}
d_{S_{\eta}}(t) := \sum_{w\in S_{\eta}} t^{\des(w)} \in \mathbb{Z}[t]
\end{align*}
the \emph{descent polynomial of $S_{\eta}$} (which we also call the \emph{generalised Eulerian polynomial} (\emph{of type $\mathsf{A}$})).
Therefore, we call its coefficients \emph{generalised Eulerian numbers} (\emph{of type $\mathsf{A}$}).
\end{dfn}
Note that for $\eta = (1,\dots,1)$ we have $S_{\eta} = S_n$ and the coefficients of $d_{S_n}$ are the Eulerian numbers (of type $\mathsf{A}$).

\begin{dfn}\label{dfn:Carlitz-q-Eulerian-polynomialS_eta}
The \emph{generalised Carlitz's $q$-Eulerian polynomial} is the bivariate generating polynomial for the major index and descent statistic over the set of multiset permutations, namely
\begin{align*}
C_{S_{\eta}}(q,t) := \sum_{w\in S_{\eta}} q^{\maj(w)} t^{\des(w)} \in \mathbb{Z}[q,t].
\end{align*}
\end{dfn} 
\noindent For instance, for $\eta = (2,3)$ we have $$C_{S_{(2,3)}}(q,t)= (q^6 + q^5 + q^4) t^2 + (q^4 + 2 q^3 + 2 q^2 + q )t + 1.$$
Note that $C_{S_{\eta}}$ is indeed a generalisation of the well-known Carlitz's $q$-Eulerian polynomial; see~\cite{Carlitz54, Carlitz75}.
Further, we remark that $C_{S_{\eta}}$ only depends on the partition of $n$. We define it --- and analogously $C_{B_{\eta}}$ in Definition~\ref{dfn:Carlitz-q-Eulerian-polynomialB_eta} --- for the composition anyway.

\subsubsection{Signed multiset permutations} \label{section:subsignedmultisetperm}

In the following we introduce signed multiset permutations and give definitions of the  major index and descent statistic generalising those discussed in Section~\ref{section:submultisetperm}.\\
Recall (e.g.,~from \cite[Chapter~8.1]{BjBr}) that \emph{signed permutations} are obtained from permutations $w=w_1\cdots w_n\in S_n$ (in one-line notation), where each letter $w_i$ is independently equipped with a sign $\pm 1$. We denote by $B_n$ the set of signed permutations on the letters $1,\dots ,n$.

Similarly, we obtain the set of \emph{signed multiset permutations} $B_{\eta}$ from the set of multiset permutations $S_{\eta}$ by `adding signs': 
more precisely, the elements of $B_{\eta}$ are given by a multiset permutation $w\in S_{\eta}$ and $\epsilon\colon [n] \to \{\pm 1 \}$, a sign vector which attaches every $i$ (or $w_i$) with a positive or negative sign. It is sometimes useful to write an element of $B_{\eta}$ as a pair $w^{\epsilon}:=(w,\epsilon)$, where $w\in S_{\eta}$ and $\epsilon\colon [n] \to \{\pm 1 \}$ encodes the signs appearing in $w^{\epsilon}$.\\
In one-line notation, we write $\bar{i}$ instead of $-i$. 
For example, for $\eta=(2)$ we abbreviate the set of signed multiset permutations to $$B_{\eta}=\{(11,(1,1)), (11,(1,-1)), (11,(-1,1)), (11,(-1,-1))\} = \{11, 1\bar{1}, \bar{1}1, \bar{1}\bar{1}\}.$$
For the definition of a descent set of an element $w\in B_{\eta}$ we need a notion of standardisation.
We use the map $\std\colon S_{\eta}\to S_n$, which is known for multiset permutations (cf.~\cite[Chapter 1]{Stanley}), defined as follows: 
for an element $w\in S_{\eta}$ we obtain $\std(w)\in S_n$ by substituting the $\eta_1$ $1$s from left to right with $1,\dots, \eta_1$, the $\eta_2$ $2$s from left to right with $\eta_1+1,\dots,\eta_1+\eta_2$ and so on.
We extend this \emph{standardisation} to signed multiset permutations:
\begin{align*}
 B_{\eta} &\to B_n, \\
(w,\epsilon) &\mapsto (\std(w), \epsilon).
\end{align*}
We denote both the standardisation on $S_{\eta}$ and the one on $B_{\eta}$ by $\std$.
For instance, $\std(\bar{2}\bar{2}12\bar{1})= \bar{3}\bar{4}15\bar{2}$. 

\begin{dfn}\label{dfn:descent_set-B_eta}
We define the \emph{descent set} of a signed multiset permutation $w^{\epsilon}\in B_{\eta}$ to be
\begin{align*}
\Des(w^{\epsilon}) := 
\{ i\in [n-1]_0 : \std(w^{\epsilon})_i > \std(w^{\epsilon})_{i+1} \},
\end{align*}
where $\std(w^{\epsilon})_0 := 0$.
In other words, 
\begin{align*}
\Des(w^{\epsilon}) = 
\{ i\in [n-1]_0 :\ & \epsilon(i)=\epsilon(i+1)=1 \text{ and } w_i > w_{i+1}, \\
	\text{or } & \epsilon(i)=\epsilon(i+1)=-1 \text{ and } w_i \leq w_{i+1}, \\
	\text{ or } & \epsilon(i)=1 \text{ and } \epsilon(i+1)=-1 \},
\end{align*}
where $w_0 :=0$ and $\epsilon(0) := 1$. 
In particular, $0\in \Des(w^{\epsilon})$ if and only if $\epsilon(1)=-1$.
\end{dfn}
\noindent Note that on elements in $B_n$ our definition of the descent set coincides with the Coxeter-theoretic one; see \cite[Proposition~8.1.2]{BjBr}.

Further, the \emph{major index} and \emph{descent} statistics are
\begin{align*}
\maj(w^{\epsilon}) := \sum_{i\in \Des(w^{\epsilon})} i\quad \text{ and }\quad \des(w^{\epsilon}) := |\Des(w^{\epsilon})|.
\end{align*}
For instance, for $\bar{2}\bar{2}12\bar{1}\in B_{(2,3)}$ we have $\Des(\bar{2}\bar{2}12\bar{1})=\Des(\bar{3}\bar{4}15\bar{2})=\{0,1,4\}$, hence $\maj(\bar{2}\bar{2}12\bar{1})=5$ and $\des(\bar{2}\bar{2}12\bar{1})=~3$.\\
To simplify notation we omit $\epsilon$ and write $w\in B_{\eta}$ instead of $w^{\epsilon}\in B_{\eta}$.

\begin{dfn}\label{dfn:descentPolynomialB_eta}
We denote by
\begin{align*}
d_{B_{\eta}}(t) := \sum_{w \in B_{\eta}} t^{\des(w^{\epsilon})} \in \mathbb{Z}[t]
\end{align*}
the \emph{descent polynomial of $B_{\eta}$}.
We call the coefficients of the descent polynomial \emph{generalised Eulerian numbers of type~$\mathsf{B}$}.
\end{dfn}
Note that for $\eta = (1,\dots,1)$, we have $B_{\eta} = B_n$, so the coefficients of $d_{B_{n}}$ are the Eulerian numbers of type~$\mathsf{B}$. 
Analogously to the definition of Carlitz's $q$-Eulerian polynomial for $S_{\eta}$ we define a type~$\mathsf{B}$-analogue:
\begin{dfn}\label{dfn:Carlitz-q-Eulerian-polynomialB_eta}
The bivariate generating polynomial for the major index and descent statistic over the set of signed multiset permutations is denoted by
\begin{align*}
C_{B_{\eta}}(q,t) := \sum_{w\in B_{\eta}} q^{\maj(w)} t^{\des(w)} \in \mathbb{Z}[q,t].
\end{align*}
\end{dfn} 
For instance, for $\eta = (1,2)$ the generating polynomial of the joint distribution of major index and descent statistic over $B_{(1,2)}$ is given by
$$C_{B_{(1,2)}}(q,t)=  q^3 t^3 + (3 q^3 + 5 q^2 + 3 q) t^2 + (3 q^2 + 5 q + 3) t + 1.$$
Different definitions of major index and descent for signed multiset permutations appear in \cite{FoataHanIII} and \cite{Lin}.
The smallest composition $\eta$ where even our descent polynomial differs from the ones defined in \cite{FoataHanIII} and \cite{Lin} is $\eta=(1,2)$:
for $\sum_{w \in B_{\eta}} t^{\des(w)}$ we obtain
\begin{align*}
2 t^4 + 7 t^3 + 9 t^2 + 5 t + 1 \quad &\text{ using the definition of the descent statistic in \cite{FoataHanIII}},\\
9 t^2 + 14 t + 1 \quad &\text{ using the one in \cite{Lin}},\\
\text{and }\qquad t^3 + 11 t^2 + 11 t + 1 \quad &\text{ using Definition~\ref{dfn:descent_set-B_eta}}.
\end{align*}

Our goal is to construct for each $\eta$ and $X\in \{S_{\eta},B_{\eta}\}$ a polytope such that the generating function 
\begin{align*}
\frac{C_X(q,t)}{\prod_{i=0}^n (1-q^it)}
\end{align*}
is a $q$-analogue of its Ehrhart series. For $X=S_{\eta}$ this is \eqref{MacMahonFormula}, which is MacMahon's formula of type $\mathsf{A}$ (see Theorem~\ref{MacMahonA} for the details).

\subsection{Ehrhart theory}\label{subsection:EhrhartTheory}

As we now explain, both rational functions in \eqref{MacMahonFormula} and in \eqref{MacMahonFormulaTypeB} may be interpreted as $q$-analogues of Ehrhart series of certain polytopes. We start with the special case where $q=1$, viz.\ classical Ehrhart theory.

\subsubsection{Classical Ehrhart theory}

Throughout, let $\mathcal{P}=\mathcal{P}_n$ be an $n$-dimensional lattice polytope in $\mathbb{R}^n$. 
The \emph{lattice point enumerator} of $\mathcal{P}$ is the function $\Lp_{\mathcal{P}}\colon \mathbb{N}_0 \to \mathbb{N}_0$ given by $\Lp_{\mathcal{P}}(k):=|k\mathcal{P} \cap \mathbb{Z}^n|$. 
For details on polytopes and Ehrhart theory see \cite{BeckRobins} and \cite{Ziegler}.\\
A fundamental result in this theory is Ehrhart's Theorem~\cite{Ehrhart}, which states that the function $\Lp_{\mathcal{P}}(k)$ is a polynomial in $k$, the so-called \emph{Ehrhart polynomial}. Equivalently, its generating function, the \emph{Ehrhart series} of $\mathcal{P}$, is of the form
\begin{align*}
\Ehr_{\mathcal{P}}(t) := \sum_{k \geq 0} \Lp_{\mathcal{P}}(k) t^k = \frac{h_{\mathcal{P}}^*(t)}{(1-t)^{n+1}} \in \mathbb{Q}(t),
\end{align*}
where the numerator, the so-called \emph{$h^*$-polynomial} of $\mathcal{P}$, has degree at most $n$.

Lattice point enumeration behaves well under taking products:
Ehrhart series of products of polytopes can be described in terms of \emph{Hadamard products}.
For series $A(t)=\sum_{k\geq 0} a_k t^k, B(t)=\sum_{k\geq 0} b_k t^k \in \mathbb{Q}(t)$ we denote their Hadamard product (with respect to~$t$) by $(A\ast B)(t) := \sum_{k\geq 0} a_k b_k t^k$. 
\begin{rem}
For a composition $\eta=(\eta_1,\dots,\eta_r)$ of $n$, let $\mathcal{P}_{\eta_i}$ be an $\eta_i$-dimensional polytope for $i\in [r]$.
Further, for $i\in [r]$ let $\Lp_{\mathcal{P}_{\eta_i}}$ be the Ehrhart polynomial and $\Ehr_{\mathcal{P}_{\eta_i}}$ the Ehrhart series of $\mathcal{P}_{\eta_i}$. 
The product $\mathcal{P}_{\eta} := \mathcal{P}_{\eta_1}\times \dots \times \mathcal{P}_{\eta_r}$ is an $n$-dimensional polytope with Ehrhart polynomial $\prod_{i=1}^r \Lp_{\mathcal{P}_{\eta_i}}$.
Therefore, its Ehrhart series is given by
\begin{align*} 
\Ehr_{\mathcal{P}_{\eta}}(t) = \sum_{k\geq 0} \left( \prod_{i=1}^r \Lp_{\mathcal{P}_{\eta_i}}(k)\right) t^k = \Conv_{i=1}^r \Ehr_{\mathcal{P}_{\eta_i}}(t).
\end{align*}
\end{rem}
The polytopes which are relevant for us are products of simplices or cross polytopes. 
It turns out that they provide a connection to permutation statistics, which we specify in Theorems~\ref{MacMahonA} and~\ref{MacMahonB}.
\begin{dfn}\label{dfn_simplex_cp}
The \emph{$n$-dimensional standard simplex} is the convex hull of zero and the unit vectors, denoted by 
$$\Delta_n := \conv\{0,e_1,\dots ,e_n\} = \{x\in \mathbb{R}^n : 0\leq x_1+ \dots + x_n \leq 1\}.$$ 
The \emph{$n$-dimensional cross polytope} is the convex hull of the unit vectors and their negatives:
$$\cp_n := \conv\{e_1,-e_1,\dots ,e_n,-e_n\} = \{x\in \mathbb{R}^n : |x_1|+ \dots + |x_n| \leq 1\}.$$
\end{dfn}
\begin{exa}\label{ExampleEhrhart_polynomials_series}
The $h^*$-polynomials of products of standard simplices and cross polytopes can be described through permutations statistics:
\begin{itemize}
\item[(a)] The Ehrhart series of the $n$-dimensional standard simplex $\Delta_n$ is given by $$\Ehr_{\Delta_n}(t)= \sum_{k\geq 0} \binom{n+k}{n} t^k = \frac{1}{(1-t)^{n+1}} = \frac{d_{S_{(n)}}(t)}{(1-t)^{n+1}}.$$
For the $n$-dimensional unit cube $\Box_n := [0,1]^n$, which is the product of $n$ one-dimensional simplices, we obtain $$\Ehr_{\Box_n}(t) = \sum_{k\geq 0} (k+1)^n t^k =\frac{d_{S_n}(t)}{(1-t)^{n+1}}.$$
\item[(b)] The Ehrhart series of the $n$-dimensional cross polytope $\cp_n$ is given by $$\Ehr_{\cp_n}(t)= \sum_{k\geq 0} \sum_{j=0}^n \binom{n}{j}\binom{k+n-j}{n} t^k = \frac{(1+t)^n}{(1-t)^{n+1}} = \frac{d_{B_{(n)}}(t)}{(1-t)^{n+1}}.$$
For the $n$-dimensional cube (centred at the origin) 
$\mboxdot_n := [-1,1]^n$, which is the product of $n$ one-dimensional cross polytopes, we obtain $$\Ehr_{\mboxdot_n}(t)= \sum_{k\geq 0} (2k+1)^n t^k = \frac{d_{B_n}(t)}{(1-t)^{n+1}}.$$
\end{itemize}
\end{exa}
The first three Ehrhart series can be found in \cite[Section 2]{BeckRobins}, the last one follows from Corollary~\ref{Cor_hypercubein0}, a special case of Theorem~\ref{MacMahonB}, which was already shown in \cite[Theorem~3.4]{Brenti}.
The description of the $h^*$-polynomials in terms of descent polynomials over a suitable set of permutations follows from Theorems~\ref{MacMahonA} and~\ref{MacMahonB}.

\subsubsection{Reciprocity results}
We recall reciprocity results in the context of Ehrhart theory and study functional equations of Ehrhart series of products of polytopes.\\
Let $\Lp_{\mathcal{P}^{\circ}}(k)$ denote the numbers of lattice points in the relative interior of the $k$th dilate of~$\mathcal{P}$.
E.g.~for the two-dimensional cross polytope $\cp_2$ and its second dilate the numbers of lattice points in its interior is $\Lp_{\cp_2^{\circ}}(1)=1$, respectively $\Lp_{\cp_2^{\circ}}(2)=5$.

\begin{thm}[Ehrhart--Macdonald Reciprocity, {\cite[Theorem~4.1]{BeckRobins}}]\label{Ehrhart-Macdonald-reciprocity}
For a polytope $\mathcal{P}$ the Ehrhart polynomial satisfies the reciprocity law
\begin{align*}
\Lp_{\mathcal{P}}(-k) = (-1)^n \Lp_{\mathcal{P}^{\circ}}(k).
\end{align*}
\end{thm}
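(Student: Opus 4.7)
The plan is to deduce the theorem from Stanley's reciprocity for rational cones. First I would homogenise: place $\mathcal{P}$ at height one inside $\mathbb{R}^{n+1}$ by forming $\mathcal{P}\times\{1\}$, and let $K := \operatorname{cone}(\mathcal{P}\times\{1\})$ be the $(n+1)$-dimensional pointed rational cone obtained by coning over the origin. The lattice points of $K$ at height $k$ are in natural bijection with $k\mathcal{P}\cap\mathbb{Z}^n$, so the integer-point transform
\begin{align*}
\sigma_K(z_1,\dots,z_n,t) := \sum_{(m,k)\in K\cap\mathbb{Z}^{n+1}} z_1^{m_1}\cdots z_n^{m_n} t^k
\end{align*}
specialises at $z_1=\cdots=z_n=1$ to $\Ehr_{\mathcal{P}}(t)$; the analogous specialisation of $\sigma_{K^\circ}$ yields $\Ehr_{\mathcal{P}^\circ}(t) := \sum_{k\geq 1}\Lp_{\mathcal{P}^\circ}(k) t^k$.

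Next I would invoke Stanley's reciprocity theorem: for any pointed rational $d$-dimensional cone $K$, the identity $\sigma_K(1/\mathbf z) = (-1)^d \sigma_{K^\circ}(\mathbf z)$ holds as an identity of rational functions. Its usual proof triangulates $K$ into simplicial subcones via a half-open decomposition so that lattice points are accounted for exactly once, then verifies the identity on each simplicial piece through a direct geometric-series computation. Applying this with $d=n+1$ and specialising $z_1=\cdots=z_n=1$ produces the rational-function identity
\begin{align*}
\Ehr_{\mathcal{P}}(1/t) = (-1)^{n+1}\, \Ehr_{\mathcal{P}^\circ}(t).
\end{align*}

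Finally, I would extract the polynomial identity. By Ehrhart's theorem $\Lp_{\mathcal{P}}(k)$ is a polynomial of degree $n$ in $k$, and the standard Laurent-expansion lemma for generating series of polynomial sequences gives $\Ehr_{\mathcal{P}}(1/t) = -\sum_{k\geq 1}\Lp_{\mathcal{P}}(-k)\, t^k$ as rational functions. Comparing this with $(-1)^{n+1}\sum_{k\geq 1}\Lp_{\mathcal{P}^\circ}(k)\, t^k$ and matching coefficients of $t^k$ yields $\Lp_{\mathcal{P}}(-k) = (-1)^n \Lp_{\mathcal{P}^\circ}(k)$, as required. The main obstacle is Stanley's reciprocity itself, which I would treat as a black-box input from the cited reference; the only additional bookkeeping is the sign flip from $(-1)^{n+1}$ in the cone identity to $(-1)^n$ in the polytope identity, which is precisely the content of the Laurent-expansion lemma.
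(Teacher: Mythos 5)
The paper states this result without proof, citing \cite[Theorem~4.1]{BeckRobins}, and your outline is precisely the argument given there: cone over $\mathcal{P}\times\{1\}$, apply Stanley reciprocity for the integer-point transform of a pointed rational cone, specialise the $z$-variables to $1$, and convert $\Ehr_{\mathcal{P}}(1/t)=(-1)^{n+1}\Ehr_{\mathcal{P}^{\circ}}(t)$ into the polynomial identity via the Laurent-expansion lemma, which accounts for the sign change from $(-1)^{n+1}$ to $(-1)^n$. Your sketch is correct and takes essentially the same route as the cited source.
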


\noindent Similarly to the definition of the Ehrhart polynomial for the interior of a polytope $\mathcal{P}$, we define the Ehrhart series for $\mathcal{P}^{\circ}$ as
\begin{align*}
\Ehr_{\mathcal{P}^{\circ}}(t) := \sum_{k \geq 1} \Lp_{\mathcal{P}^{\circ}}(k) t^k.
\end{align*}
Therefore, Theorem~\ref{Ehrhart-Macdonald-reciprocity} is equivalent to a reciprocity of the generating function
\begin{align*}
\Ehr_{\mathcal{P}}(t^{-1}) = (-1)^{n+1} \Ehr_{\mathcal{P}^{\circ}}(t).
\end{align*}
 
A polytope $\mathcal{P}$ is \emph{reflexive} if, after a suitable translation, $\mathcal{P}=\{x\in\mathbb{R}^n:Ax\leq\textbf{1}\}$ for some $A\in \Mat_n(\mathbb{Z})$ and $0\in\mathcal{P}^{\circ}$.
Equivalently, $\Lp_{\mathcal{P}^{\circ}}(k) = \Lp_{\mathcal{P}}(k-1)$ holds for all $k\in \mathbb{N}_0$.\\
An $n$-dimensional lattice polytope $\mathcal{P}$ is \emph{Gorenstein (of index $l$)} (or equivalently \emph{of codegree~$l$}) if there exists an $l\in \mathbb{N}$ such that
\begin{align*}
& \Lp_{\mathcal{P}^{\circ}}(l-1) = 0,\ \Lp_{\mathcal{P}^{\circ}}(l) = 1 \\
\text{and } & \Lp_{\mathcal{P}^{\circ}}(k) = \Lp_{\mathcal{P}}(k-l) \quad \forall\ k>l.
\end{align*}

\begin{rem}\ \label{Remark:Reflexive_Gorenstein} 
\begin{enumerate} 
\item[(i)] A lattice polytope is reflexive if and only if it is Gorenstein of index $1$.
\item[(ii)] A lattice polytope is Gorenstein of index $l$ if and only if $l\mathcal{P}$ is reflexive, i.e.\
	\begin{align*}
		l \mathcal{P}= \{ x\in \mathbb{R}^n : Ax\leq \textbf{1} \}
	\end{align*}
	for some $A\in \Mat_n(\mathbb{Z})$ and $\textbf{1}$ the all-one vector.
\end{enumerate}
\end{rem}

Whether a polytope is reflexive or not can also be seen from its Ehrhart series:
a polytopes is reflexive if and only if the Ehrhart--Macdonald Reciprocity turns into a self-reciprocity, which is also known as Hibi's palindromic theorem (see, e.g.,~\cite[Theorem~4.6]{BeckRobins}).
The next proposition is an extension of this result to Gorenstein polytopes; see~\cite[Exercise 4.8]{BeckRobins}.
\begin{prop}\label{thm_Gorenstein_functional_eq}
An $n$-dimensional lattice polytope $\mathcal{P}$ is Gorenstein of index $l$ if and only if 
\begin{align*}
\Ehr_{\mathcal{P}}(t^{-1}) = (-1)^{n+1} t^l \Ehr_{\mathcal{P}}(t),
\end{align*}
i.e.\ the nonzero coefficients of the $h^*$-polynomial are symmetric. 
\end{prop}

The property of being Gorenstein behaves well under taking products: 
\begin{prop} \label{prop_Gorensteinproduct}
Let $\eta=(\eta_1,\dots ,\eta_r)$ be a composition of $n$ and $\mathcal{P}_{\eta_i}$ be an $\eta_i$-dimensional lattice polytopes for $i\in [r]$. 
The product $\mathcal{P}_{\eta} = \prod_{i=1}^r \mathcal{P}_{\eta_i}$ is Gorenstein of index $l$ if and only if every $\mathcal{P}_{\eta_i}$ is Gorenstein of index $l$.
\end{prop}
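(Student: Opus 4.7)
Plan: I would apply the characterisation of Gorenstein polytopes in Remark~\ref{Remark:Reflexive_Gorenstein}(ii), which says that $\mathcal{P}$ is Gorenstein of index $l$ if and only if $l\mathcal{P}$ is reflexive. Since dilation commutes with Cartesian products, $l\mathcal{P}_{\eta} = \prod_{i=1}^{r} l\mathcal{P}_{\eta_i}$, so the claim reduces to the following purely polytopal statement: \emph{a product of lattice polytopes is reflexive if and only if each factor is reflexive.}

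The ``if'' direction is direct. Assume each $l\mathcal{P}_{\eta_i}$ is reflexive, so (after translation) $l\mathcal{P}_{\eta_i} = \{x_i \in \mathbb{R}^{\eta_i} : A_i x_i \leq \mathbf{1}\}$ for some integer matrix $A_i$ with $0$ in its interior. The block-diagonal matrix $A = \operatorname{diag}(A_1,\ldots,A_r)$ is again an integer matrix, and
\[
l\mathcal{P}_{\eta} \;=\; \prod_{i=1}^{r} l\mathcal{P}_{\eta_i} \;=\; \{ x \in \mathbb{R}^n : A x \leq \mathbf{1}\},
\]
with $0$ in its interior; this exhibits $l\mathcal{P}_{\eta}$ as reflexive.

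For the ``only if'' direction, assume $l\mathcal{P}_{\eta}$ is reflexive. Pick a translation vector $v \in \mathbb{R}^n$ and split it according to the product structure $\mathbb{R}^n = \bigoplus_i \mathbb{R}^{\eta_i}$ as $v = (v_1,\ldots,v_r)$, so that $l\mathcal{P}_{\eta} - v = \prod_i (l\mathcal{P}_{\eta_i} - v_i) = \{ x \in \mathbb{R}^n : A x \leq \mathbf{1}\}$ with $A$ integer and $0$ interior. The decisive structural fact is that every facet of a product polytope $K_1 \times \cdots \times K_r$ has the form $K_1 \times \cdots \times F_i \times \cdots \times K_r$ for a facet $F_i$ of a single factor $K_i$, so the associated defining affine functional involves only the coordinates of the $i$-th block. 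Grouping the rows of $A$ according to which $l\mathcal{P}_{\eta_i}$ they cut out therefore decouples $A x \leq \mathbf{1}$ into $r$ independent subsystems $A_i x_i \leq \mathbf{1}$ with each $A_i$ integer, and these present the translated factors $l\mathcal{P}_{\eta_i} - v_i$. Since $0 \in (l\mathcal{P}_{\eta} - v)^\circ = \prod_i (l\mathcal{P}_{\eta_i} - v_i)^\circ$ projects to $0 \in (l\mathcal{P}_{\eta_i} - v_i)^\circ$ in each factor, every $l\mathcal{P}_{\eta_i}$ is reflexive.

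The main obstacle is the facet decomposition of the product polytope; once this is in hand, the matching of integer coefficient matrices and interior lattice points in the two directions is bookkeeping.
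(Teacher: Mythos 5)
Your proof is correct and follows essentially the same route as the paper: both reduce via Remark~\ref{Remark:Reflexive_Gorenstein}~(ii) to the statement that $l\mathcal{P}_{\eta}=\prod_i l\mathcal{P}_{\eta_i}$ is reflexive if and only if each factor is, and both realise this through a block-diagonal integer matrix $A=\operatorname{diag}(A_1,\dots,A_r)$. The only difference is that you justify the block decoupling of $A$ via the facet structure of product polytopes, a step the paper's proof asserts without elaboration.
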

\begin{proof}
First of all, we observe that the index of a Gorenstein polytope is uniquely determined.
Now assume $\mathcal{P}_{\eta}$ is Gorenstein of index $l$, i.e.\ after a suitable translation 
	\begin{align*}
		l \mathcal{P}_{\eta} = \prod_{i=1}^r l \mathcal{P}_{\eta_i} = \{ x\in \mathbb{R}^n : Ax \leq \textbf{1} \}.
	\end{align*}
The matrix $A$ can be chosen as a block matrix with matrices $A_i$ on the diagonal such that after a suitable translation $l \mathcal{P}_{\eta_i} = \{ x\in \mathbb{R}^{\eta_i} : A_i x\leq \textbf{1}\}$. Thus $l \mathcal{P}_{\eta_i}$ is reflexive and therefore $\mathcal{P}_{\eta_i}$ is Gorenstein of index $l$ for all $i\in [r]$.\\
On the other hand, if all $\mathcal{P}_{\eta_i}$ are Gorenstein of index $l$, i.e.\ the $l$th dilate is of the form
	\begin{align*}
		l \mathcal{P}_{\eta_i} = \{ x\in \mathbb{R}^{\eta_i} : A_i x \leq \textbf{1} \},
	\end{align*}
it follows that 
	\begin{align*}
		l \mathcal{P}_{\eta} = \prod_{i=1}^r l \mathcal{P}_{\eta_i} = \{ x\in \mathbb{R}^n : Ax \leq \textbf{1} \}
	\end{align*}
for a matrix $A$ as described above.
\end{proof}

By Remark~\ref{Remark:Reflexive_Gorenstein}~(i) we obtain an analogue of Proposition~\ref{prop_Gorensteinproduct} for reflexive polytopes.
\begin{cor}
The product $\mathcal{P}_{\eta} = \prod_{i=1}^r \mathcal{P}_{\eta_i}$ of $\eta_i$-dimensional polytopes $\mathcal{P}_{\eta_i}$ is reflexive if and only if every $\mathcal{P}_{\eta_i}$ is reflexive.
\end{cor}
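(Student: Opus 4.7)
The plan is to deduce this corollary directly from Proposition~\ref{prop_Gorensteinproduct} by specialising the index parameter. Since Remark~\ref{Remark:Reflexive_Gorenstein}(i) identifies reflexivity with the property of being Gorenstein of index $1$, each occurrence of ``reflexive'' in the statement can be rephrased in the Gorenstein language without loss of content.

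Concretely, I would first note that $\mathcal{P}_{\eta}$ is reflexive if and only if it is Gorenstein of index $1$, by Remark~\ref{Remark:Reflexive_Gorenstein}(i). By Proposition~\ref{prop_Gorensteinproduct} applied with $l=1$, the product $\prod_{i=1}^r \mathcal{P}_{\eta_i}$ is Gorenstein of index $1$ if and only if every factor $\mathcal{P}_{\eta_i}$ is Gorenstein of index $1$. Applying Remark~\ref{Remark:Reflexive_Gorenstein}(i) again, factorwise, converts the latter condition back into the statement that every $\mathcal{P}_{\eta_i}$ is reflexive. Chaining these equivalences gives the corollary.

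There is essentially no obstacle here: the corollary is a one-line specialisation, and the only thing to be mildly careful about is to invoke Remark~\ref{Remark:Reflexive_Gorenstein}(i) in both directions (once for the product, once for each factor). No separate verification of the matrix/inequality description is required, as that work was already done inside the proof of Proposition~\ref{prop_Gorensteinproduct}.
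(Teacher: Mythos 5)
Your proposal is correct and matches the paper's own argument: the paper derives this corollary precisely by combining Remark~\ref{Remark:Reflexive_Gorenstein}~(i) with Proposition~\ref{prop_Gorensteinproduct} specialised to $l=1$. Nothing further is needed.
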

The next corollary follows immediately from Propositions~\ref{thm_Gorenstein_functional_eq} and~\ref{prop_Gorensteinproduct}.
\begin{cor} \label{cor_functionalequation_HP}
For polytopes $\mathcal{P}_{\eta}$ and $\mathcal{P}_{\eta_i}$, $i \in [r]$, as in Proposition~\ref{prop_Gorensteinproduct}
the Ehrhart series of $\mathcal{P}_{\eta}$ satisfies a functional equation of the form 
	\begin{align*}
		\Ehr_{\mathcal{P}_{\eta}}(t^{-1}) = (-1)^{n+1} t^{l} \Ehr_{\mathcal{P}_{\eta}}(t)
	\end{align*}
if and only if each of the Hadamard factors satisfies a functional equation, i.e.\
	\begin{align*}
		\Ehr_{\mathcal{P}_{\eta_i}}(t^{-1}) = (-1)^{\eta_i+1} t^{l} \Ehr_{\mathcal{P}_{\eta_i}}(t).
	\end{align*}
\end{cor}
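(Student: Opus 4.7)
The plan is to derive this corollary directly by chaining Theorem~\ref{thm_Gorenstein_functional_eq} with Proposition~\ref{prop_Gorensteinproduct}, since each of the three statements in the combined equivalence (functional equation for the product, Gorenstein-ness of all factors of common index $l$, functional equation for each factor) is characterised by one of these two results.

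First, I would apply Theorem~\ref{thm_Gorenstein_functional_eq} to the $n$-dimensional lattice polytope $\mathcal{P}_{\eta} = \prod_{i=1}^r \mathcal{P}_{\eta_i}$: the functional equation
\[
\Ehr_{\mathcal{P}_{\eta}}(t^{-1}) = (-1)^{n+1} t^{l}\, \Ehr_{\mathcal{P}_{\eta}}(t)
\]
holds if and only if $\mathcal{P}_{\eta}$ is Gorenstein of index $l$. Next, I would invoke Proposition~\ref{prop_Gorensteinproduct} to convert this into the equivalent statement that every Hadamard factor $\mathcal{P}_{\eta_i}$ is Gorenstein of the same index $l$. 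Finally, a second application of Theorem~\ref{thm_Gorenstein_functional_eq}, now to each $\eta_i$-dimensional polytope $\mathcal{P}_{\eta_i}$, translates the latter back into the desired family of functional equations
\[
\Ehr_{\mathcal{P}_{\eta_i}}(t^{-1}) = (-1)^{\eta_i+1} t^{l}\, \Ehr_{\mathcal{P}_{\eta_i}}(t) \qquad (i \in [r]).
\]

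Since each step is a biconditional, the conjunction of the chain produces the stated \emph{if and only if}. I do not anticipate any real obstacle: the signs match because $\eta_1 + \dots + \eta_r = n$ forces $\prod_i (-1)^{\eta_i+1}$ and $(-1)^{n+1}$ to behave consistently under Hadamard products of Ehrhart series, and the uniqueness of the Gorenstein index (remarked in the proof of Proposition~\ref{prop_Gorensteinproduct}) guarantees that the index $l$ can be shared across all factors without ambiguity. The only point worth stating explicitly in the write-up is that the same $l$ that governs the functional equation for $\mathcal{P}_{\eta}$ is exactly the common Gorenstein index of the factors, which is precisely what Proposition~\ref{prop_Gorensteinproduct} provides.
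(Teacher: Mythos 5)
Your proposal is correct and follows exactly the paper's own (one-line) proof: chain Theorem~\ref{thm_Gorenstein_functional_eq} applied to $\mathcal{P}_{\eta}$, Proposition~\ref{prop_Gorensteinproduct}, and Theorem~\ref{thm_Gorenstein_functional_eq} applied to each $\mathcal{P}_{\eta_i}$, all of which are biconditionals sharing the same index $l$. The closing aside about the signs $\prod_i(-1)^{\eta_i+1}$ versus $(-1)^{n+1}$ is unnecessary (and would not be the right bookkeeping for a Hadamard product anyway), but your actual argument never relies on it.
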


\begin{rem}
The equivalence in Corollary~\ref{cor_functionalequation_HP} is remarkable in the sense that in general for  generating functions of polynomials only one direction, namely the reverse one, follows.
\end{rem}
We close the section with the following example which yields palindromicity statements of the generalised Eulerian numbers of types~$\mathsf{A}$ and~$\mathsf{B}$ in Remark~\ref{rem_palindromicity}.
\begin{exa}\label{exa_Gorenstein}\
\begin{enumerate}
\item[(i)] The $n$-dimensional standard simplex is Gorenstein of index $n+1$.
\item[(ii)] The product of standard simplices $\Delta_{\eta} = \prod_{i=1}^r \Delta_{\eta_i}$ is Gorenstein (of index $l$) if and only if all factors have the same dimension, i.e.\ $\eta_i = \eta_j (=l-1)$ for all $i,j\in [r]$.
\item[(iii)] The $n$-dimensional cross polytope is reflexive.
\item[(iv)] The product of cross polytopes $\cp_{\eta} = \prod_{i=1}^r \cp_{\eta_i}$ is reflexive for every $\eta$.
\end{enumerate}
\end{exa}

In the next section we refine our description of the $h^*$-polynomial of (products of) standard simplices and cross polytopes by introducing weight functions on $\mathbb{Z}^n$ or, equivalently in the language of permutation statistics, by taking the major index into account.

\subsubsection{Weighted Ehrhart theory}

For a variable $q$ we consider $q$-analogues of Ehrhart series of standard simplices and cross polytopes by refining the lattice point enumeration. 
That is, for a suitable polytope $\mathcal{P}$ we define a family $\boldsymbol{\mu}_n= \left( \mu_{k,n} \right)_{k\in \mathbb{N}_0}$ of functions $\mu_{k,n} \colon k\mathcal{P}\cap \mathbb{Z}^n \to \mathbb{N}_0$ such that the refinement on the side of the Ehrhart series corresponds to the one by the major index on the permutation side, more precisely
\begin{equation}\label{eq:aim_weightedES}
\Ehr_{\mathcal{P},\boldsymbol{\mu}_n}(q,t) := \sum_{k\geq 0} \sum_{x\in k\mathcal{P}\cap \mathbb{Z}^n} q^{\mu_{k,n}(x)} t^k = \frac{C_X(q,t)}{\prod_{i=0}^n (1-q^i t)}
\end{equation}
for pairs $(\mathcal{P},X)$ as in Example~\ref{ExampleEhrhart_polynomials_series}, viz.\ $(\Delta_n,S_{(n)})$, $(\Box_n,S_{n})$, $(\cp_n,B_{(n)})$, $(\mboxdot_n,B_{n})$, and more generally $(\Delta_{\eta},S_{\eta})$ and $(\cp_{\eta},B_{\eta})$.
Since the refinement on the lattice point enumeration results from weighting the lattice points, we call the functions $\mu_{k,n}$ weight functions.
The functions we define are inspired by~\cite{Chapoton}, partially building on earlier work of~\cite{Stanley72}.
Another different refinement of the Ehrhart series of the simplex, the hypercube and the cross polytope using commutative algebra is developed in~\cite{AdeyemoSzendroi}.
First, we discuss the weight functions defined by Chapoton and explain why new ones were needed for our purpose.

Under certain assumptions, in \cite{Chapoton} a $q$-analogue of the Ehrhart series is defined by introducing a weight function $\lambda_n$ which is also a linear form on $\mathbb{R}^n$:

\begin{ass}\label{Conditionslinearform}
Assume that the pair $(\mathcal{P}, \lambda_n)$ satisfies 
\begin{align*} 
\begin{split}
\lambda_n &\geq 0 \text{ for all } x\in \mathcal{P} \text{ and }\\
\lambda_n(x) &\neq \lambda_n(y) \text{ for every edge }x-y \text{ of } \mathcal{P}.
\end{split}
\end{align*}
\end{ass}

\begin{thm}[{\cite[Proposition~2.1]{Chapoton}}]\label{weighted_Ehrhart_series}
For a linear weight function $\lambda_n$ and a polytope $\mathcal{P}$ satisfying Assumption~\ref{Conditionslinearform} the generating function of $\sum_{x\in k\mathcal{P}} q^{\lambda_n(x)}$, the so-called \emph{$q$-Ehrhart series}, 
\begin{align*}
\Ehr_{\mathcal{P},\lambda_n}(q,t) := \sum_{k\geq 0} \sum_{x\in k\mathcal{P}\cap \mathbb{Z}^n} q^{\lambda_n(x)} t^k
\end{align*}
is a rational function in $q$ and $t$.
\end{thm}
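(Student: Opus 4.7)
The plan is to reduce the claim to the standard rationality of lattice-point generating functions over rational polyhedral cones. First I would re-express the double sum as a single sum over the homogenised cone
\begin{align*}
C(\mathcal{P}) := \{ (x,k) \in \mathbb{R}^{n+1} : k \geq 0,\ x \in k\mathcal{P} \},
\end{align*}
which is rational polyhedral because $\mathcal{P}$ is a lattice polytope. Extending $\lambda_n$ trivially in the $k$-coordinate, $q^{\lambda_n(x)} t^k$ becomes a single character on $\mathbb{Z}^{n+1}$ and
\begin{align*}
\Ehr_{\mathcal{P},\lambda_n}(q,t) = \sum_{(x,k) \in C(\mathcal{P}) \cap \mathbb{Z}^{n+1}} q^{\lambda_n(x)} t^k.
\end{align*}

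Next I would fix any lattice triangulation $\mathcal{P} = \bigcup_i \Delta_i$ into $n$-simplices and lift it to a \emph{half-open} decomposition of $C(\mathcal{P})$ into simplicial cones $\sigma_i$, a standard device (see, e.g., Beck--Robins), so that every lattice point of $C(\mathcal{P})$ lies in exactly one $\sigma_i$. On each $\sigma_i$, with ray generators $(v_0^{(i)}, 1), \dots, (v_n^{(i)}, 1)$ coming from the vertices of $\Delta_i$, the usual fundamental-parallelepiped argument gives
\begin{align*}
\sum_{(x,k) \in \sigma_i \cap \mathbb{Z}^{n+1}} q^{\lambda_n(x)} t^k = \frac{P_{\sigma_i}(q,t)}{\prod_{j=0}^n \bigl( 1 - q^{\lambda_n(v_j^{(i)})} t \bigr)},
\end{align*}
where $P_{\sigma_i}(q,t)$ enumerates the finitely many lattice points of the half-open fundamental parallelepiped of $\sigma_i$ weighted by $q^{\lambda_n(\cdot)} t^{(\cdot)}$, and is therefore a polynomial in $q$ and $t$. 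The assumption $\lambda_n \geq 0$ on $\mathcal{P}$ ensures that each exponent $\lambda_n(v_j^{(i)})$ is a non-negative integer, so every denominator factor is a unit in $\mathbb{Q}(q)[[t]]$ and the formal manipulation is justified. Summing the finitely many rational contributions yields an element of $\mathbb{Q}(q,t)$, as claimed.

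The only non-routine step is arranging the half-open decomposition so as to avoid double-counting along shared faces, but this is a standard construction and poses no essential difficulty here. The second clause of Assumption~\ref{Conditionslinearform}, that $\lambda_n$ separates the endpoints of each edge of $\mathcal{P}$, is not required for rationality per se; it will be needed downstream to obtain a clean denominator $\prod_{i=0}^n (1 - q^{a_i} t)$ with distinct exponents $a_i$, and I would just remark on this at the end.
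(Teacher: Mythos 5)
Your argument is correct, but note that the paper does not prove this statement at all: it is imported verbatim from Chapoton \cite{Chapoton}, so there is no internal proof to match. Your route --- homogenising to the cone over $\mathcal{P}$, triangulating, passing to half-open simplicial cones, and summing the fundamental-parallelepiped contributions --- is a standard and valid way to get rationality, and your closing remark is accurate: the edge condition in Assumption~\ref{Conditionslinearform} plays no role in mere rationality. Chapoton's own proof, however, aims at (and the paper immediately uses) the strictly stronger conclusion stated right after the theorem: that $\sum_{x\in k\mathcal{P}\cap\mathbb{Z}^n} q^{\lambda_n(x)}$ is a polynomial in the $q$-integer $[k]_q$ with coefficients in $\mathbb{Q}(q)$, equivalently that the series has denominator exactly $\prod_{i=0}^{d}(1-q^i t)$ with consecutive exponents $0,1,\dots,d=\max\{\lambda_n(x):x\in\mathcal{P}\}$. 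Your triangulation produces denominator factors $1-q^{\lambda_n(v_j^{(i)})}t$ whose exponents are whatever values $\lambda_n$ takes on the triangulation's vertices, and these need not be $0,\dots,d$; it is precisely the genericity of $\lambda_n$ on edges that lets Chapoton control this (his argument runs through the vertex tangent cones \emph{\`a la} Brion, where non-constancy on edges guarantees the specialisation of the multivariate generating function to the single character $q^{\lambda_n}$ has no spurious poles). So: your proof establishes the literal statement quoted, by a more elementary and more general argument; what it does not buy is the refined denominator and the $q$-Ehrhart polynomial $\Lp_{\mathcal{P},\lambda_n}\in\mathbb{Q}(q)[z]$ that the paper relies on in Example~\ref{exa_q-Ehrhartseries_Chapoton} and beyond. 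It would be worth saying this explicitly rather than only remarking that the edge condition is needed ``downstream.''
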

The weighted Ehrhart series as in Theorem~\ref{weighted_Ehrhart_series} are special cases of integer point transform, see, e.g., \cite[Chapter~3]{BeckRobins}.
Chapoton \cite{Chapoton} studied a special case in which there exists a $q$-analogue of the Ehrhart polynomial.
Indeed, Theorem~\ref{weighted_Ehrhart_series} is equivalent to showing that the weighted sum $\sum_{x\in k\mathcal{P}} q^{\lambda_n(x)}$ is given by a polynomial $\Lp_{\mathcal{P}, \lambda_n}(z)\in \mathbb{Q}(q)[z]$. We refer to the proof of \cite[Theorem~3.1]{Chapoton} for the details.
\begin{thm}[{\cite[Theorem~3.1]{Chapoton}}]\label{q-Ehrhart_polynomials}
For a linear weight function $\lambda_n$ and a polytope $\mathcal{P}$ satisfying Assumption~\ref{Conditionslinearform} there exists a polynomial $\Lp_{\mathcal{P}, \lambda_n}(z)\in \mathbb{Q}(q)[z]$ of degree $\max\{\lambda_n(x) : x\in\mathcal{P}\}$ such that for all $k\in\mathbb{N}_0$
\begin{align*}
\Lp_{\mathcal{P},\lambda_n}([k]_q) = \sum_{x\in k\mathcal{P}\cap \mathbb{Z}^n} q^{\lambda_n(x)}.
\end{align*}
For a fixed weight $\lambda_n$ we call $\Lp_{\mathcal{P},\lambda_n}$ the \emph{$q$-Ehrhart polynomial} of $\mathcal{P}$.
\end{thm}
Note that for $q=1$ we obtain the classical Ehrhart series and polynomial. 
In Theorems~\ref{MacMahonA} and~\ref{MacMahonB} we give further examples of $q$-Ehrhart polynomials in a different setting than Assumption~\ref{Conditionslinearform}.\\
For instance, Assumption~\ref{Conditionslinearform} is satisfied for $\lambda_n$ defined by $\lambda_n(x)=\sum_{i=1}^n x_i$, the unit cube and a simplex $\mathcal{O}(\Delta_n):= \conv \{ 0, e_n, e_{n-1}+e_n, \dots ,\sum_{i=1}^n e_i \}$ which is similar to the standard simplex and which we specify later.
The next example can be found in \cite[Section~5]{Chapoton} or rather \cite[\S 8]{Stanley72}.
\begin{exa}\label{exa_q-Ehrhartseries_Chapoton}
Let $\lambda_n$ be the linear form on $\mathbb{R}^n$ given by $\lambda_n(x)=\sum_{i=1}^n x_i$ for $x\in\mathbb{R}^n$.
\begin{itemize}
\item[(i)] The $q$-Ehrhart series of the $n$-dimensional simplex $\mathcal{O}(\Delta_n)$ is given by 
\begin{align*}
\Ehr_{\mathcal{O}(\Delta_n),\lambda_n}(q,t)= \sum_{k\geq 0} \binom{k+n}{n}_q t^k = \frac{1}{\prod_{i=0}^n (1-q^i t)}.
\end{align*}
\item[(ii)] The $q$-Ehrhart series of the $n$-dimensional unit cube $\Box_n$ is 
\begin{align*}
\Ehr_{\Box_n,\lambda_n}(q,t) = \sum_{k\geq 0} [k+1]_q^n t^k =\frac{C_{S_n}(q,t)}{\prod_{i=0}^n (1-q^i t)}.
\end{align*}
\end{itemize}
\end{exa}
\noindent Note that \eqref{eq:aim_weightedES} is fulfilled for the hypercube and $\lambda_n$ as defined above. 

But Assumption~\ref{Conditionslinearform} is restrictive in the following sense: 
considering the four polytopes in Example~\ref{ExampleEhrhart_polynomials_series} and $\lambda_n$ defined as the sum of the coordinates, Assumption~\ref{Conditionslinearform} is only satisfied for the unit cube. 
In particular, we can not compute the $q$-analogue of the Ehrhart series (in the sense of Theorem~\ref{weighted_Ehrhart_series}) of the standard simplex $\Delta_n$ and the weight function $\lambda_n$. 
Considering the $q$-Ehrhart series of $\mathcal{O}(\Delta_n)$ in Example~\ref{exa_q-Ehrhartseries_Chapoton}~(i), we notice that the numerator is equal to $C_{S_{(n)}}(q,t)=1$ and the polytope $\mathcal{O}(\Delta_n)$ itself is not too far away from the standard simplex.
More precisely, given a chain $1<\dots <n$ the standard $n$-simplex $\Delta_n$ is called the chain polytope and $\mathcal{O}(\Delta_n)$ is called the order polytope of this chain. The chain and order polytope share a number of properties, for example, their Ehrhart series coincide; cf.~\cite[Theorem~4.1]{Stanley2p1p}.

Next, we define a bijection between the chain polytope and the order polytope of the $n$-chain. 
As we will see later, this behaves well (i.e.\ the pair $(\Delta_n, S_{(n)})$ fulfils \eqref{eq:aim_weightedES}) when we equip $\mathcal{O}(\Delta_n)$ with the linear form defined by the sum of the coordinates.  

\noindent We omit the routine computations of the proof of the following lemma:
\begin{lem}\label{lem:bij_orderpolytope}
For $k\in \mathbb{N}$, the map
\begin{align*}
\phi_{k,n} \colon k\Delta_n &\to k \mathcal{O}(\Delta_n) \\
(x_1,\dots ,x_n) &\mapsto (k-\sum_{i=1}^n x_i, k-\sum_{i=2}^n x_i, \dots , k-x_n)
\end{align*}
is a bijection between the $k$th dilate of the chain polytope and the $k$th dilate of the order polytope of the $n$-chain.
\end{lem}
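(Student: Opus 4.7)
The plan is to write down the inverse of $\phi_{k,n}$ explicitly and then check two inclusions plus two compositional identities; the telescoping nature of the map makes all four verifications routine.

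First, I would verify that $\phi_{k,n}(k\Delta_n) \subseteq k\mathcal{O}(\Delta_n)$. Writing $y_j := k - \sum_{i=j}^{n} x_i$ for the coordinates of $\phi_{k,n}(x)$, one reads off the telescoping identity $y_{j+1} - y_j = x_j$ for $j = 1, \dots, n-1$, together with the boundary values $y_n = k - x_n$ and $y_1 = k - \sum_{i=1}^n x_i$. Combining these with the defining constraints $x_i \ge 0$ and $\sum_{i=1}^n x_i \le k$ of $k\Delta_n$ yields precisely the chain of inequalities on $(y_1, \dots, y_n)$ that cuts out the $k$th dilate of the order polytope $\mathcal{O}(\Delta_n)$ of the $n$-chain.

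Next, I would propose the candidate inverse
\begin{align*}
\psi_{k,n}(y_1,\dots,y_n) := (y_2 - y_1,\ y_3 - y_2,\ \dots,\ y_n - y_{n-1},\ k - y_n)
\end{align*}
and check that $\psi_{k,n}(k\mathcal{O}(\Delta_n)) \subseteq k\Delta_n$: the monotonicity of the $y_j$ along the chain makes each of the first $n-1$ coordinates nonnegative, the constraint $y_n \le k$ gives nonnegativity of the last coordinate, and the coordinates sum to $k - y_1 \le k$. A direct computation then shows that $\phi_{k,n} \circ \psi_{k,n}$ and $\psi_{k,n} \circ \phi_{k,n}$ both collapse to the identity via telescoping, simultaneously proving bijectivity and turning the two inclusions above into equalities.

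No genuine obstacle is expected: the proof amounts to organised bookkeeping. The only point that requires care is matching the orientation of the chain of inequalities describing $k\mathcal{O}(\Delta_n)$ against the coordinate ordering built into the definition of $\phi_{k,n}$, so that the telescoping identities produce exactly the inequalities one needs on each side.
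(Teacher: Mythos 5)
Your proposal is correct and follows essentially the same route as the paper's own (omitted-from-print) argument: verify the forward inclusion from the halfspace descriptions, exhibit the inverse $\psi_{k,n}(y)=(y_2-y_1,\dots,y_n-y_{n-1},k-y_n)$, check it lands in $k\Delta_n$, and confirm both compositions telescope to the identity. No gaps.
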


\begin{figure}
\begin{subfigure}[t]{0.33\textwidth}
\includegraphics[width=\textwidth]{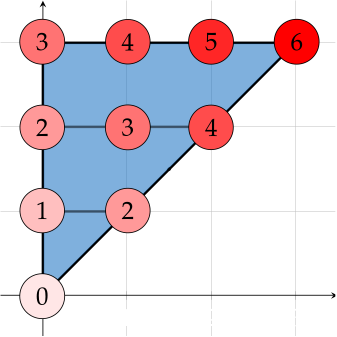}
\subcaption{$\lambda_2$ on  $3\mathcal{O}(\Delta_2)$}
\end{subfigure}
   \hspace{.045\linewidth}
\begin{subfigure}[t]{0.33\textwidth}
\includegraphics[width=\textwidth]{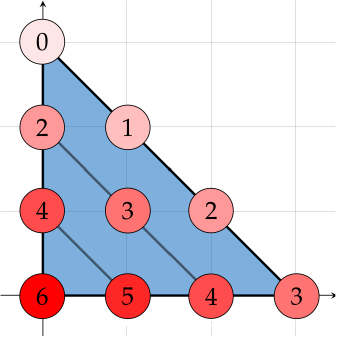}
\subcaption{$\lambda_2 \circ \phi_{3,2}$ on $3\Delta_2$}
\end{subfigure}
\caption{Weight functions on $3\mathcal{O}(\Delta_2)$ and $3\Delta_2$, respectively.}
\label{Fig:Bijection_weights}
\end{figure}
Using the bijection $\phi_{k,n}$ we define a weight function on the $k$th dilate of the standard simplex which is illustrated by Figure~\ref{Fig:Bijection_weights} for $k=3$ and $n=2$: by applying $\phi_{3,2}$, every lattice point in $3\Delta_2$ corresponds to point in $y\in 3\mathcal{O}(\Delta_2)$ which is equipped with the weight $\lambda_2(y)$. Thus we obtain the weight $(\lambda_2\circ \phi_{3,2})(x)$ of $x\in 3\Delta_2$.\\
For general $k$ and $n$ we determine the following:

\begin{dfn}\label{dfn:weightfunction-simplex}
For $k\in \mathbb{N}_0$ and $n\in \mathbb{N}$ we define the weight functions
\begin{align*}
\mu_{k,n}\colon k\Delta_n &\to \mathbb{N}_0 \\
x &\mapsto ( \lambda_n \circ \phi_{k,n} ) (x)
\end{align*}  
on the $n$-dimensional standard simplex. We write $\boldsymbol{\mu}_n$ for the family of weight functions $\left(\mu_{k,n}\right)_{k\in \mathbb{N}_0}$.
\end{dfn}

Analogously to Theorem~\ref{weighted_Ehrhart_series}, we define the \emph{weighted Ehrhart series of the standard simplex} to be
\begin{align*}
\Ehr_{\Delta_n,\boldsymbol{\mu}_n}(q,t) := \sum_{k\geq 0} \sum_{x\in k\Delta_n\cap \mathbb{Z}^n} q^{\mu_{k,n}(x)} t^k. 
\end{align*}
Note that for $q=1$ we obtain the classical Ehrhart series.

Next, we extend the weight functions on standard simplices to weight functions on products of those.
For products of the simplices $\Delta_{\eta_1}, \dots , \Delta_{\eta_r}$ the map
\begin{align*}
\phi_{k,\eta} := \prod_{i=1}^r \phi_{k,\eta_i} \colon \prod_{i=1}^r k\Delta_{\eta_i} &\to \prod_{i=1}^r \mathcal{O}(k\Delta_{\eta_i}), \\
 (x_1,\dots , x_r) &\mapsto (\phi_{k,\eta_1}(x_1), \dots ,\phi_{k,\eta_r}(x_r))
\end{align*}
(induced by the bijection in Lemma~\ref{lem:bij_orderpolytope}) is a bijection between the product of the chain polytopes and the product of the order polytopes of standard $\eta_i$-simplices.

\begin{dfn}\label{dfn:weight_simplices}
For $k\in \mathbb{N}_0$ and $\eta$ a composition of $n$, we define weight functions on the product of simplices $\Delta_{\eta}=\prod_{i=1}^r \Delta_{\eta_i}$ 
\begin{align*}
 \prod_{i=1}^r \mu_{k,\eta_i} \colon k\Delta_{\eta} & \to \mathbb{N}_0, \\
x & \mapsto (\lambda_n \circ \phi_{k,\eta})(x).
\end{align*}
More precisely, an element $x=(x_1,\dots ,x_r)\in k\Delta_{\eta}$ is sent to $$\lambda_n(\phi_{k,\eta_1}(x_1),\dots , \phi_{k,\eta_r}(x_r)) = \mu_{k,\eta_1}(x_1)+ \dots + \mu_{k,\eta_r}(x_r).$$
We denote by $\boldsymbol{\mu}_{\eta}$ the family of the weight functions above.
\end{dfn}
The Ehrhart series of products of standard simplices is a Hadamard product with respect to the variable $t$, more precisely
\begin{align} \label{eq:2.2.2nr.1}
\Ehr_{\Delta_{\eta}, \boldsymbol{\mu}_n}(q,t) = \Conv_{i=1}^r \Ehr_{\Delta_{\eta_i},\boldsymbol{\mu}_{\eta_i}} (q,t).
\end{align}
This leads to a $q$-analogue of Example~\ref{ExampleEhrhart_polynomials_series}~(a): 
\begin{exa}\ \label{exa-q-Ehrhartseries}
\begin{itemize}
\item[(i)] The weighted Ehrhart series of the $n$-dimensional standard simplex is 
\begin{align*}
\Ehr_{\Delta_n, \boldsymbol{\mu}_n} (q,t) = \sum_{k\geq 0} \binom{n+k}{n}_q t^k = \frac{1}{\prod_{i=0}^n (1-q^i t)} = \frac{C_{S_{(n)}}(q,t)}{\prod_{i=0}^n (1-q^i t)}.
\end{align*}
\item[(ii)] For $\eta=(1,\dots , 1)$ the weighted Ehrhart series of the $n$-dimensional unit cube is given by
\begin{align*}
\Ehr_{\Box_n, \boldsymbol{\mu}_{\eta}} (q,t) = \sum_{k\geq 0} \binom{1+k}{1}_q^n t^k = \frac{C_{S_n}(q,t)}{\prod_{i=0}^n (1-q^i t)}.
\end{align*}
\end{itemize}
\end{exa}
\begin{proof}\
\begin{itemize}
\item[(i)] For the weighted Ehrhart series of the standard simplex we obtain
\begin{align*}
\Ehr_{\Delta_n,\boldsymbol{\mu}_n}(q,t) =&  \sum_{k\geq 0} \sum_{x\in k\Delta_n\cap \mathbb{Z}^n} q^{\lambda_n(\phi_{k,n}(x))} t^k \\
=& \sum_{k\geq 0} \sum_{x\in k\mathcal{O}(\Delta_n)\cap \mathbb{Z}^n} q^{\lambda_n(x)} t^k \\
=& \frac{1}{\prod_{i=0}^n (1-q^i t)}, 
\end{align*}
where the last equality follows from Example~\ref{exa_q-Ehrhartseries_Chapoton}~(i).
\item[(ii)] For $n=1$ the chain and order polytope of a $1$-chain coincide. Thus
\begin{align*}
\Ehr_{\Box_n, \boldsymbol{\mu}_n} (q,t) = \Ehr_{\Box_n, \lambda_{\eta}} (q,t) = \sum_{k\geq 0} \binom{1+k}{1}_q^n t^k = \frac{\sum_{w\in S_n} q^{\maj(w)} t^{\des(w)}}{\prod_{i=0}^n (1-q^i t)},
\end{align*}
as in Example~\ref{exa_q-Ehrhartseries_Chapoton}~(ii).
\end{itemize} \vspace{-0.5cm}
\end{proof}
\noindent Generalising the last example to products of $\eta_i$-dimensional standard simplices and to permutation statistics on $S_{\eta}$, we obtain MacMahon's formula of type~$\mathsf{A}$ (Theorem~\ref{MacMahonA}).\\

Next, we extend the weight functions above to ones on cross polytopes.
Every lattice point in the $k$th dilate of the cross polytope can be mapped to a unique lattice point in the $k$th dilate of the standard simplex by taking absolute values of the entries.
To each point in there we associated a weight via the weight function $\mu_{k,n}$; see Definition~\ref{dfn:weight_simplices}.
In other words, we obtain weights on lattice points in the cross polytope by extending the one on the standard simplex via reflections along coordinate hyperplanes.
This is illustrated in Figure~\ref{fig:weights-on-3cp_2} for the third dilate of the two-dimensional cross polytope.

\begin{figure}
\begin{subfigure}[t]{0.33\textwidth}
\includegraphics[width=\textwidth]{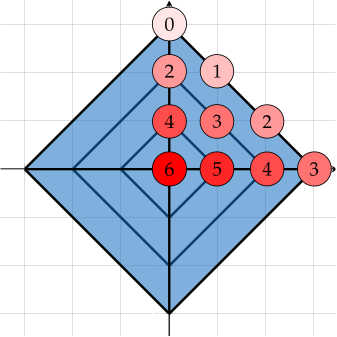}
\subcaption{weights defined by $\mu_{3,2}$ on $3\Delta_2$}
\end{subfigure}
   \hspace{.045\linewidth}
\begin{subfigure}[t]{0.33\textwidth}
\includegraphics[width=\textwidth]{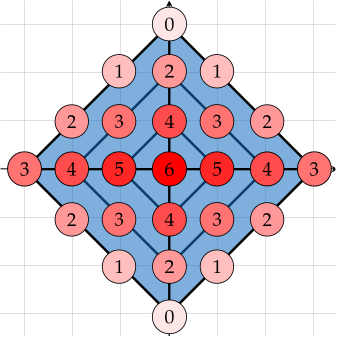}
\subcaption{weights on $3\protect\cp_2$}
\end{subfigure}
     \caption{How to obtain weights on $3\protect\cp_2$ from $\mu_{3,2}$ on $3\Delta_2$.}
     \label{fig:weights-on-3cp_2}
\end{figure}

\begin{dfn}\label{dfn:weightfunction_cp}
For $k\in \mathbb{N}_0$ and $|\cdot |\colon \mathbb{Z}^n \to \mathbb{N}_0^n$ where $x\mapsto |x|:=(|x_1|,\dots ,|x_n|)$ we define weight functions
\begin{align*}
\bar{\mu}_{k,n}\colon k\cp_n & \to \mathbb{N}_0 \\
x & \mapsto \mu_{k,n}(|x|)=(\lambda_n \circ \phi_{k,n})(|x|)
\end{align*}
on the $n$-dimensional cross polytope.
We denote by $\bar{\boldsymbol{\mu}}_n$ the family of functions $\left( \bar{\mu}_{k,n} \right)_{k\in \mathbb{N}_0}$.
\end{dfn}
\noindent Note that $\bar{\boldsymbol{\mu}}_n$ restricted to the standard simplex $\Delta_n$ is equal to $\boldsymbol{\mu}_n$.

This way we define the \emph{weighted Ehrhart series of the cross polytope}
\begin{align*}
\Ehr_{\cp_n,\bar{\boldsymbol{\mu}}_n}(q,t) := \sum_{k\geq 0} \sum_{x\in k\cp_n\cap \mathbb{Z}^n} q^{\bar{\mu}_{k,n}(x)} t^k = \sum_{k\geq 0} \sum_{x\in k\cp_n \cap \mathbb{Z}^n} q^{\lambda_n(\phi_{k,n}(|x|))} t^k.
\end{align*}
Analogously to Definition~\ref{dfn:weight_simplices} we define weight functions on products of cross polytopes:
\begin{dfn}\label{dfn:weight_cp}
For $k\in \mathbb{N}_0$ and $\eta$ a composition of $n$, we define weight functions on the product of cross polytopes $\cp{\eta}=\prod_{i=1}^r \cp{\eta_i}$ to be
\begin{align*}
\prod_{i=1}^r \bar{\mu}_{k,\eta_i} \colon k\cp{\eta} & \to \mathbb{N}_0 \\
x & \mapsto \mu_{k,\eta}(|x|)=(\lambda_n \circ \phi_{k,\eta})(|x|).
\end{align*}
More precisely, an element $x=(x_1,\dots ,x_r)\in k\cp{\eta}$ is sent to $$\lambda_n(\phi_{k,\eta_1}(|x_1|),\dots , \phi_{k,\eta_r}(|x_r|))=\bar{\mu}_{k,\eta_1}(x_1)+\dots +\bar{\mu}_{k,\eta_r}(x_r),$$ where $|x_i|\in k\Delta_{\eta_i}$ for every $i$.
We denote by $\bar{\boldsymbol{\mu}}_{\eta}$ the family of the functions above.
\end{dfn}
Analogously to \eqref{eq:2.2.2nr.1} we obtain
\begin{align}\label{eq:2.2.2nr.2}
\Ehr_{\cp_{\eta},\bar{\boldsymbol{\mu}}_n} (q,t) = \Conv_{i=1}^r \Ehr_{\cp_{\eta_i},\bar{\boldsymbol{\mu}}_{\eta_i}}(q,t),
\end{align}
where the Hadamard product is taken with respect to $t$.
The essence of our type-$\mathsf{B}$ analogue of MacMahon's formula (see Theorem~\ref{MacMahonB}) is an explicit description of the numerator of \eqref{eq:2.2.2nr.2}.\\

Both families of weight functions $\boldsymbol{\mu}_{\eta}$ and $\bar{\boldsymbol{\mu}}_{\eta}$ are not linear forms, so this gives rise to a different approach defining $q$-analogues of Ehrhart series, or, equivalently, $q$-Ehrhart polynomials, than Chapoton uses.
Nevertheless the weight functions are in some sense natural as they are defined by the obvious subdivision of the cross polytope into standard simplices, a bijection between a chain ($\Delta_n$) and its corresponding order polytope ($\mathcal{O}(\Delta_n)$) and, in the end, a linear form $\lambda_n$ given by the sum of the coordinates. 
Closing the circle, our main result (Theorem~\ref{MacMahonB}) and its corollaries show that the weight functions $\bar{\boldsymbol{\mu}}_{\eta}$ satisfy \eqref{eq:aim_weightedES}.

\section{MacMahon's formula of type~$\mathsf{B}$}\label{section:MacMahonTypeB}

We obtain an interpretation of MacMahon's formula (of type~$\mathsf{A}$) in terms of weighted Ehrhart series  (more precisely, in terms of $q$-Ehrhart polynomials) of products of standard simplices. 
Likewise, we develop a similar type-$\mathsf{B}$ analogue of MacMahon's formula which admits an interpretation as weighted Ehrhart series of products of cross polytopes.
Recall that $\eta=(\eta_1,\dots ,\eta_r)$ is a composition of an integer $n$ into $r$ parts.

\begin{thm}[MacMahon's formula of type~$\mathsf{A}$, {Theorem~\hyperref[ThmA]{A} made precise}] \label{MacMahonA}
The generating polynomial of the joint distribution of major index and descent statistic over the set of multiset permutations is a $q$-analogue of the $h^*$-polynomial of products of standard simplices, i.e.\
\begin{align}\label{eq:MacMahonA}
\frac{C_{S_{\eta}}(q,t)}{\prod_{i=0}^n (1-q^i t)} = \sum_{k\geq 0} \left( \prod_{i=1}^r \binom{k+\eta_i}{\eta_i}_q \right) t^k = \Ehr_{\Delta_{\eta},\boldsymbol{\mu}_{\eta}}(q,t).
\end{align}
\end{thm}
\begin{proof}
The first identity was proven by MacMahon; cf.~\cite[\S 462, Vol.~2, Ch.~IV, Sect.~IX]{MacMahon}.
Using \eqref{eq:2.2.2nr.1}, Definition~\ref{dfn:weight_simplices} of the weight functions $\boldsymbol{\mu}_{\eta}$ and Example~\ref{exa-q-Ehrhartseries}~(i) leads to the second equality.
\end{proof}

For $\eta=(1,\dots,1)$  the identity specialises to the symmetric group on the left hand side of Theorem~\ref{MacMahonA} (see also Example~\ref{exa-q-Ehrhartseries}~(ii)). 
Since $S_n$ is a Coxeter group of type~$\mathsf{A}$, we refer to Theorem~\ref{MacMahonA} as MacMahon's formula of type~$\mathsf{A}$.\\
By passing from permutations to signed permutations we get the hyperoctahedral group~$B_n$, a Coxeter group of type~$\mathsf{B}$, and its generalisation $B_{\eta}$, cf.~Section~\ref{section:subsignedmultisetperm}.
On the polytope side we consider the cross polytope $\cp_n = \conv\{e_1,-e_1,\dots ,e_n,-e_n\}$ as a signed analogue of the standard simplex $\Delta_n=\conv\{0,e_1,\dots,e_n\}$.
This analogy carries over to the equality of rational functions in Theorem~\ref{MacMahonA}, namely the generating polynomial of the joint distribution of major index and descent statistic over the set of signed multiset permutations over a suitable denominator and the weighted Ehrhart series of products of cross polytopes.

\begin{thm}[MacMahon's formula of type~$\mathsf{B}$, {Theorem~\hyperref[ThmB]{B} made precise}] \label{MacMahonB}
The generating polynomial of the joint distribution of major index and descent statistic over the set of signed multiset permutations is a $q$-analogue of the $h^*$-polynomial of products of cross polytopes, i.e.\
\begin{align}\label{eq:MacMahonB}
\frac{C_{B_{\eta}}(q,t)}{\prod_{i=0}^n(1-q^i t)} &= \sum_{k\geq 0} \left( \prod_{i=1}^r \sum_{j= 0}^{\eta_i} \left( q^{\frac{j(j-1)}{2}} \binom{\eta_i}{j}_q \binom{k-j+\eta_i}{\eta_i}_q \right)\right) t^k= \Ehr_{\cp_{\eta}, \bar{\boldsymbol{\mu}}_{\eta}}(q,t).
\end{align}
\end{thm}
Note that the term in the middle of \eqref{eq:MacMahonB} includes a $q$-Ehrhart polynomial. 
This immediately follows from the proof of \cite[Theorem~3.1]{Chapoton}.
Further, notice that Theorem~\ref{MacMahonB} extends Theorem~\ref{MacMahonA}. 
Indeed, retaining only the summand for $j=0$ in the inner sum of the term in the middle of~\eqref{eq:MacMahonB}, we obtain the weighted Ehrhart series of $\Delta_{\eta}$ on the right hand side and $S_{\eta}$ instead of $B_{\eta}$ on the left hand side; cf.~the proof of Theorem~\ref{MacMahonB}.
MacMahon's formula of type~$\mathsf{B}$ yields the following corollaries, which are $q$-analogues of Example~\ref{ExampleEhrhart_polynomials_series}~(b).

\begin{cor}\label{Cor_cp} 
For $\eta=(n)$, Theorem~\ref{MacMahonB} implies 
\begin{align*}
\frac{C_{B_{(n)}}(q,t)}{\prod_{i=0}^n (1-q^i t)} = \frac{\prod_{i=0}^{n-1} (1+q^i t)}{\prod_{i=0}^n (1-q^i t)} = \Ehr_{\cp_n, \bar{\boldsymbol{\mu}}_{(n)}} (q,t)
\end{align*}
for the weighted Ehrhart series of the $n$-dimensional cross polytope.
\end{cor}

\begin{cor}\label{Cor_hypercubein0}
For $\eta =(1,\dots ,1)$ and therefore $B_{\eta}=B_n$, Theorem~\ref{MacMahonB} implies
\begin{align}\label{eq:Ehrcubecentredin0}
\frac{C_{B_n}(q,t)}{\prod_{i=0}^n (1-q^i t)} = \sum_{k\geq 0} \left( [k+1]_q + [k]_q \right)^n t^k = \Ehr_{\mboxdot_n, \bar{\boldsymbol{\mu}}_n}(q,t)
\end{align}
for the weighted Ehrhart series of the $n$-dimensional cube $\mboxdot_n=[-1,1]^n$.
In particular the Eulerian numbers of type~$\mathsf{B}$ appear as the coefficients of the $h^*$-polynomial of the cube centred at the origin.
\end{cor}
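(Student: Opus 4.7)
The plan is to obtain this corollary as a direct specialization of Theorem~\ref{MacMahonB} to the composition $\eta=(1,\dots,1)$, so no new machinery is needed; one just has to check that the right-hand side of Theorem~\ref{MacMahonB} collapses to the claimed form.

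First, I would record the identifications that come from setting $\eta=(1,\dots,1)$: on the permutation side, $B_{(1,\dots,1)}=B_n$ by the definition of signed permutations in Section~\ref{section:subsignedmultisetperm}, so $C_{B_\eta}(q,t)=C_{B_n}(q,t)$; on the polytope side, $\cp_{(1,\dots,1)} = \cp_1\times\dots\times\cp_1 = [-1,1]^n = \mboxdot_n$, and the weight family $\bar{\boldsymbol{\mu}}_\eta$ restricted to this polytope is the family $\bar{\boldsymbol{\mu}}_n$ of Definition~\ref{dfn:weight_cp}. Hence Theorem~\ref{MacMahonB} in this case already yields
\[
\frac{C_{B_n}(q,t)}{\prod_{i=0}^n(1-q^it)} = \Ehr_{\mboxdot_n,\bar{\boldsymbol{\mu}}_n}(q,t),
\]
which is the outer equality of (\ref{eq:Ehrcubecentredin0}).

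Next I would simplify the middle expression of Theorem~\ref{MacMahonB} under $\eta_i=1$. For each $i\in[n]$ the inner sum runs over $j\in\{0,1\}$; since $\binom{1}{0}_q=\binom{1}{1}_q=1$ and $q^{j(j-1)/2}=1$ for $j\in\{0,1\}$, the inner sum equals $\binom{k+1}{1}_q+\binom{k}{1}_q$. Taking the product over the $n$ factors gives $\bigl(\binom{k+1}{1}_q+\binom{k}{1}_q\bigr)^n$, which is the middle term of (\ref{eq:Ehrcubecentredin0}).

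Finally, for the ``in particular'' statement I would set $q=1$: the denominator becomes $(1-t)^{n+1}$, $C_{B_n}(1,t)=d_{B_n}(t)$ by Definitions~\ref{dfn:descentPolynomialB_eta} and~\ref{dfn:Carlitz-q-Eulerian-polynomialB_eta}, and the right-hand side reduces to the classical Ehrhart series $\Ehr_{\mboxdot_n}(t)$ (cf.~Example~\ref{ExampleEhrhart_polynomials_series}(b)). Comparing numerators over $(1-t)^{n+1}$ identifies the coefficients of $d_{B_n}(t)$, i.e.\ the Eulerian numbers of type~$\mathsf{B}$, with the coefficients of the $h^*$-polynomial of $\mboxdot_n$. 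There is no real obstacle here: everything is a bookkeeping specialization of Theorem~\ref{MacMahonB}, and the only computation worth spelling out is the collapse of the inner $j$-sum to $\binom{k+1}{1}_q+\binom{k}{1}_q$.
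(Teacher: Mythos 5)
Your proposal is correct and matches the paper's treatment: the corollary is stated there as an immediate specialization of Theorem~\ref{MacMahonB} with no separate proof, and your computation of the inner $j$-sum for $\eta_i=1$ (giving $\binom{k+1}{1}_q+\binom{k}{1}_q$) together with the identifications $B_{(1,\dots,1)}=B_n$ and $\cp_{(1,\dots,1)}=\mboxdot_n$ is exactly the bookkeeping the paper leaves implicit. The $q=1$ specialization for the ``in particular'' clause is likewise the intended argument.
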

\noindent The first identity of \eqref{eq:Ehrcubecentredin0} is also known by \cite[Equation 26]{ChowGessel}.\\

The key to prove the first identity in Theorem~\ref{MacMahonB} are barred permutations which first appear in a proof of Gessel and Stanley \cite[Section~2]{GesselStanley} and which we briefly recall in the following.\\
Throughout, let $w= w_1\dots w_n \in B_{\eta}$ denote a signed multiset permutation, which means that $w_i\in\{\pm i : i\in [r]\}$. 
We call the space between $w_i$ and $w_{i+1}$ the $i$th space of $w$ for $i\in [n-1]$. 
The space before $w_1$ is called the $0$th space and the one after $w_n$ is the $n$th space of $w$. 
A \emph{barred permutation on $w\in B_{\eta}$} is obtained by inserting bars in those spaces following the rule:
if $i\in \Des(w)$ then there is at least one bar in the $i$th space.
For example, $||12|\bar{1}$ is a barred permutation on $12\bar{1}\in B_{(2,1)}$.\\
Further we define $\mathbb{B}_{\eta}$ to be the set of all barred permutations on elements in $B_{\eta}$, $\mathbb{B}_{\eta}(k)$ the barred permutations in $\mathbb{B}_{\eta}$ with $k$ bars and $\mathbb{B}_{\eta}(w)$ the barred permutations on $w\in B_{\eta}$.
In the example above $||12|\bar{1} \in B_{(2,1)}, \mathbb{B}_{(2,1)}(3), B_{(2,1)}(12\bar{1})$.
Clearly 
\begin{align}\label{barredpermUnion}
\mathbb{B}_{\eta} = \bigcup_{k\geq 0} \mathbb{B}_{\eta}(k) = \bigcup_{w\in B_{\eta}} \mathbb{B}_{\eta}(w),
\end{align}
where the unions are disjoint.\\
We will often refer to the $i$th space of a barred permutation $\beta\in \mathbb{B}_{\eta}(w)$ by which we mean the $i$th space of the permutation $w\in B_{\eta}$.
Further for some barred permutation $\beta \in \mathbb{B}_{\eta}$ we denote by $b_i(\beta)$, $i\in [n]_0$, the number of bars in the $i$th space.
We define the \emph{weight of a barred permutation} 
\begin{align*}
\wt\colon \mathbb{B}_{\eta} & \to \mathbb{Z}[q,t] \\
\beta & \mapsto \wt(\beta)= q^{\sum_{i=0}^n i\, b_i(\beta)} t^{\sum_{i=0}^n b_i(\beta)}.
\end{align*}
For instance, for $\beta=||12|\bar{1}$ we obtain $b_0(\beta)=2, b_1(\beta)=b_3(\beta)=0, b_2(\beta)=1$ and therefore $\wt(\beta)= q^2 t^3$.

\begin{proof}[Proof of Theorem~\ref{MacMahonB}]
The proof is divided into two parts, one for each of the two asserted equalities.

\textbf{Part 1:}
To prove the first identity
\begin{equation}\label{eq:first_identity}
\frac{\sum_{w\in B_{\eta}}q^{\maj(w)}t^{\des(w)}}{\prod_{i=0}^n(1-q^i t)} = \sum_{k\geq 0} \left( \prod_{i=1}^r \sum_{j= 0}^{\eta_i} \left( q^{\frac{j(j-1)}{2}} \binom{\eta_i}{j}_q \binom{k-j+\eta_i}{\eta_i}_q \right)\right) t^k
\end{equation}
we proceed in two steps. 
First, we count all weights of barred permutations on a fixed $w\in B_{\eta}$, which sum up to the term on the left hand side of \eqref{eq:first_identity}. 
Afterwards, we sum over all weights of all barred permutations with a fixed number of bars, which gives the right hand side of \eqref{eq:first_identity}.\\
Let $\beta \in \mathbb{B}_{\eta}(w)$ for some $w\in B_{\eta}$. 
There exists a unique barred permutation $\tilde{\beta}$ on $w$ which has exactly one bar in the $i$th space if $i\in \Des(w)$ and none otherwise. 
So $\tilde{\beta}$ is minimal in $\mathbb{B}_{\eta}(w)$ with respect to its number of bars and therefore called the \emph{minimal barred permutation}.
Clearly $\wt(\tilde{\beta})= q^{\maj(w)} t^{\des(w)}$. 
We obtain all barred permutations in $\mathbb{B}_{\eta}(w)$ by inserting bars in all the spaces of $\tilde{\beta}$. 
Thus
\begin{align*}
\sum_{\beta \in \mathbb{B}_{\eta}(w)} \wt(\beta) =&\ q^{\maj(w)} t^{\des(w)} \\ 
&\ \underbrace{(1+t+t^2+\cdots)}_{\text{bars added in the $0$th space }} \underbrace{(1+q t+q^2 t^2+\cdots)}_{\text{bars added in the $1$st space }} \cdots\ \underbrace{(1+q^n t+q^{2n} t^2+\cdots)}_{\text{bars added in the $n$th space }} \\
=&\ q^{\maj(w)} t^{\des(w)} \frac{1}{1-t} \frac{1}{1-q^2 t} \cdots \frac{1}{1-q^n t} \\
=&\ \frac{q^{\maj(w)} t^{\des(w)}}{\prod_{i=0}^n (1-q^i t)}.
\end{align*}

We illustrate each step of the construction of barred permutations on $w$ by constructing $\beta = 1|\bar{1}|2|\bar{1}11||\bar{2}\bar{1}2$, a barred permutation of $w = 1\bar{1}2\bar{1}11\bar{2}\bar{1}2$. Here, the minimal barred permutation is $\tilde{\beta} = 1|\bar{1}2|\bar{1}11|\bar{2}\bar{1}2$.

Next, we count barred permutations with a fixed number of bars. 
For a signed permutation $w\in B_{\eta}$ we simplify notation by identifying a descent $i\in \Des(w)$ with its image $w(i)$.
We construct the barred permutation in $\mathbb{B}_{\eta}(k)$ by ‘putting $k$ bars in a line' and inserting $\eta_i$ elements of $\{\bar{i},i\}$ for all $1\leq i\leq r$ such that whenever there is a descent, there is a bar right after this position. Afterwards we compute the weight of the barred permutation we constructed.\\
Let $0\leq j\leq \eta_i$ denote the number of copies of $\bar{i}$ appearing in the barred permutation. For the $\eta_i-j$ copies of $i$ there are $k+1$ possible positions, namely on the left and right of every bar. Thus there are $\binom{\eta_i-j+k}{k}$ ways to allocate the $\eta_i-j$ copies of $i$. 
Another permutation statistic we make use of is the number of inversions of a multiset permutation $v$, defined by
\begin{align*}
\inv(v) := |\{ (i,j)\in [n]^2 : i<j \text{ and } v(i)>v(j) \}|.
\end{align*}
By a well-known interpretation of the $q$-binomial coefficient (see, for example, \cite[Proposition~1.7.1]{Stanley}) we have
\begin{align*}
\binom{\eta_i-j+k}{k}_q = \sum_{v\in S_{(k,\eta_i-j)}} q^{\inv(v)}.
\end{align*}
Consider the bijection between the set of barred permutations consisting of $k$ bars and $\eta_i-j$ copies of $i$ and $S_{(k,\eta_i-j)}$ by sending $i$ to 2 and each bar to $1$. Then for a fixed barred permutation $\beta$ and $v$ its image under this map we have
\begin{align*}
q^{\sum_{l=0}^{\eta_i} l\, b_l(\beta)} = q^{\inv(v)}.
\end{align*}
For example, to construct the barred permutation $1|\bar{1}|2|\bar{1}11||\bar{2}\bar{1}2$ we start with five bars in a line.
Inserting the three copies of $1$ between the five bars yields
\begin{align*}
1|||11||.
\end{align*}
This barred permutation has weight $q^{1 \cdot 3+3 \cdot 2} t^5 = q^9 t^5$. The corresponding permutation $21112211\in S_{(5,3)}$ has $9$ inversions.\\
By the definition of a barred permutation we observe the following:
\begin{itemize}
\item[(i)] There is no negative integer on the left of the first bar.
\item[(ii)] There is at most one negative integer between two bars.
\end{itemize}
We proceed with inserting negative elements, so we start again with $k$ bars in a line. 
Because of $(i)$ there are $k$ possible positions for each $\bar{i}$ and due to $(ii)$ there is at most one copy of $\bar{i}$ between two bars and on the right of the last bar. 
Thus there are $\binom{k}{j}$ ways to allocate the $\bar{i}$s. 
We fix one possible distribution and determine its weight. 
Constructing the barred permutation in the example above we obtain
\begin{align*}
| \bar{1} || \bar{1} || \bar{1}.
\end{align*}
We call a bar \emph{additional} if it does not immediately follow after a copy of $\bar{i}$ and if it is not the first bar appearing. Otherwise we call a bar \emph{nonadditional}. 
Clearly, there are $k-j$ additional bars.
For instance, the third and the last bar of $|\bar{1}||\bar{1}||\bar{1}$ are additional bars.\\
We divide the weight of the barred permutation into two factors
\begin{align*}
\wt(\beta) = q^{\sum_{l=0}^n l\, b_l(\beta)} t^{\sum_{l=0}^n b_l(\beta)} = q^{\sum_{l=0}^n l\, b_l^{\text{add}}(\beta)} t^{\sum_{l=0}^n b_l^{\text{add}}(\beta)} \cdot q^{\sum_{l=0}^n l\, b_l^{\text{non}}(\beta)} t^{\sum_{l=0}^n b_l^{\text{non}}(\beta)},
\end{align*}
where $b_l^{\text{add}}(\beta)$ is the number of additional bars in the $l$th space and $b_l^{\text{non}}(\beta)$ is the number of nonadditional bars in the $l$th space of $\beta$.
With a similar argument as above we describe the weight of the barred permutation in terms of inversions of the corresponding permutation in $S_{(j,k-j)}$: we obtain a multiset permutation $v\in S_{(j,k-j)}$ from a barred permutation $\beta$ by identifying $\bar{1}$ in $\beta$ with $2$ in $v$ and every additional bar with $1$ in $v$.
Then
\begin{align*}
q^{\sum_{l=0}^n l\, b_l^{\text{add}}(\beta)} t^{\sum_{l=0}^n b_l^{\text{add}}(\beta)} = \sum_{v\in S_{(j,k-j)}} q^{\inv(v)} t^{k-j} = \binom{k}{j}_q t^{k-j}.
\end{align*}
E.g.\ for $| \bar{1} || \bar{1} || \bar{1}$ the corresponding multiset permutation is $v = 21212\in S_{(3,5-3)}$ and its weight is given by $q^{1+2} t^2 = q^{\inv(21212)} t^2$.\\
For the second factor we take the nonadditional bars into account, which appear in the $0\text{th}, 1\text{st},\dots , (j-1)\text{th}$ space. Thus we obtain a factor of the form 
\begin{align*}
q^{\sum_{l=0}^{j-1} l\, b_l^{\text{non}}(\beta)} t^{\sum_{l=0}^{j-1} b_l^{\text{non}}(\beta)} = q^{\sum_{l=0}^{j-1} l} t^j = q^{\frac{j(j-1)}{2}} t^j.
\end{align*}
We proceed with inserting $\{\bar{i},i\}$ for all $1\leq i \leq r$ as described above in a way such that we insert the elements between two bars in an ascending order. 
In our example, we insert copies of $1,\bar{1},2$ and $\bar{2}$ between five bars: $1|||11||$, $|\bar{1}||\bar{1}||\bar{1}$, $||2|||2$, and $|||||\bar{2}$. By collecting all elements in the $i$th spaces for $0\leq i \leq n$ of the four barred permutations above and ordering them increasingly we obtain the barred permutation $1|\bar{1}|2|\bar{1}11||\bar{2}\bar{1}2$.\\
In general, this yields to
\begin{align*}
\sum_{\beta\in \mathbb{B}_{\eta}(k)} \wt(\beta) = \sum_{\beta\in \mathbb{B}_{\eta}(k)} q^{\sum_{l=0}^n l\, b_l(\beta)} t^k = \prod_{i=1}^r \sum_{j=0}^{\eta_i} \left( q^{\frac{j(j-1)}{2}} \binom{k}{j}_q \binom{\eta_i-j+k}{k}_q \right) t^k.
\end{align*}
Using the identity $\binom{n}{k}_q = \binom{n}{n-k}_q$ and rewriting $\binom{n}{k}_q = \frac{[n]_q!}{[n-k]_q![k]_q!}$ one easily sees that
\begin{align*}
\binom{k}{j}_q \binom{\eta_i -j+k}{k}_q &= \binom{k}{k-j}_q \binom{\eta_i -j+k}{k}_q = \frac{[\eta_i-j+k]_q!}{[j]_q! [k-j]_q! [\eta_i-j]_q!} \\ &= \binom{\eta_i}{j}_q \binom{k-j + \eta_i}{\eta_i}_q.
\end{align*}
The first equality of Theorem~\ref{MacMahonB} now follows from \eqref{barredpermUnion}:
\begin{align*}
\frac{\sum_{w\in B_{\eta}}q^{\maj(w)}t^{\des(w)}}{\prod_{i=0}^n(1-q^i t)} 
& = \sum_{w\in B_{\eta}} \sum_{\beta \in \mathbb{B}_{\eta}(w)} \wt(\beta)
= \sum_{k\geq 0} \sum_{\beta \in \mathbb{B}_{\eta}(k)} \wt(\beta) \\
& = \sum_{k\geq 0}  \left( \prod_{i=1}^r \sum_{j=0}^{\eta_i} \left( q^{\frac{j(j-1)}{2}} \binom{\eta_i}{j}_q \binom{k-j+\eta_i}{\eta_i}_q \right) \right) t^k.
\end{align*}

\textbf{Part 2:}
In the second part we start with the weighted Ehrhart series of an $n$-dimensional cross polytope and show that
\begin{equation}\label{eq:MacMahonB_identity2}
\sum_{k\geq 0} \left( \sum_{j= 0}^{n} \left( q^{\frac{j(j-1)}{2}} \binom{n}{j}_q \binom{k-j+n}{n}_q \right)\right) t^k= \Ehr_{\cp_n, \bar{\boldsymbol{\mu}}_n}(q,t).
\end{equation}
We generalise our results to products of cross polytopes later.

First, we subdivide the cross polytope into simplices.
This is analogous to the half-open triangulation in \cite{BeckBraun}, but since our weight function on $k\hspace{0,02cm} \cp_n$ is not a special case of integer point transform, but contains a bijection $\phi_{k,n}$, we can not abbreviate this step by using \cite{BeckBraun}.

For 
\begin{align*}
Q^J = \left\lbrace (x_1, \dots ,x_n)\in \mathbb{Z}^n : x_j<0\ \forall j\in J,\ 0\leq x_i \text{ otherwise} \right\rbrace
\end{align*}
for a subset $J\subseteq [n]$ we identify
\begin{align*}
k\hspace{0,02cm} \cp_n \cap Q^J = \left\lbrace (x_1,\dots ,x_n)\in \mathbb{Z}^n : |x_1| + \dots + |x_n| \leq k,\ x_j<0\ \forall j\in J,\ 0\leq x_i \text{ otherwise} \right\rbrace
\end{align*}
with 
\begin{align*}
\left( k\hspace{0,02cm} \cp_n \cap Q^J\right)^+ := \left\lbrace (x_1,\dots ,x_n)\in \mathbb{Z}^n : x_1 + \dots + x_n \leq k,\ 0 < x_j\ \forall j\in J,\ 0\leq x_i \text{ otherwise} \right\rbrace,
\end{align*}
which is contained in $k\Delta_n$.
For instance, the two sets are illustrated in Figure~\ref{fig:1} for $k=3$, $n=2$ and $J=\{1\}$. 
The lattice points in the red simplex in the cross polytope on the left hand side correspond to $k\hspace{0,02cm} \cp_n \cap Q^J$ and those in the shifted simplex in the cross polytope on the right hand side correspond to $\left( k\hspace{0,02cm} \cp_n \cap Q^J\right)^+$.

\begin{figure}
\centering
\begin{minipage}[b]{.33\textwidth} 
      \includegraphics[width=\textwidth]{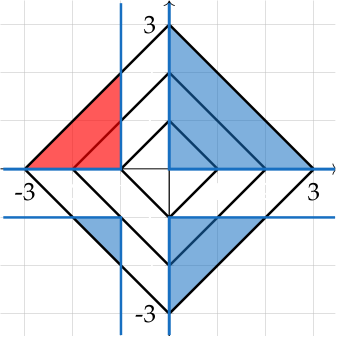}
   \end{minipage}
   \hspace{.045\linewidth}
   \begin{minipage}[b]{.33\textwidth} 
      \includegraphics[width=\textwidth]{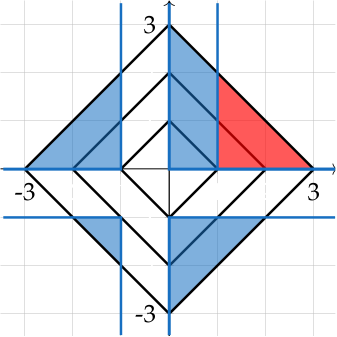}
   \end{minipage}
\caption{A subdivision of $3 \protect\cp_2 \cap \mathbb{Z}^2$
 into (shifted) standard simplices. The red simplex in the cross polytope on the left is identified with the red one on the right.}
\label{fig:1}
\end{figure}

\noindent Let $J=\{ j_1,\dots ,j_m \}$ be a subset of $[n]$ with $j_1<\dots <j_m$ and let further 
$\delta_{l\in J}$ denote the Kronecker delta, defined by $\delta_{l\in J}=1$ if $l\in J$ and zero otherwise. We compute the image of the above subset under $\phi_{k,n}$:
\begin{align*}
\phi_{k,n}&\left(\left(k\hspace{0,02cm} \cp_n \cap Q^J\right)^+\right)\\ &= \phi_{k,n} \Bigg{(} \Bigg{\{} (x_j+1, x_i)_{\substack{j\in J\\ i\in [n]\setminus J}}\in \mathbb{Z}^n : \sum_{j\in J}(x_j+1) + \sum_{i\in [n]\setminus J} x_i \leq k,\ 0 \leq x_j, x_i \Bigg{\}} \Bigg{)} \\
&=\Bigg{\{} \Bigg{(} k-\sum_{l=i}^n (x_l + \delta_{l\in J})\Bigg{)}_{i \in [n]} \in \mathbb{Z}^n : x_1 + \dots + x_n \leq k - m,\ 0 \leq x_i \Bigg{\}} \\
&= \Bigg{\{} \Bigg{(} \underbrace{k-m -\sum_{l=i}^n x_l}_{y_i} + m - \sum_{l=i}^n \delta_{l\in J}\Bigg{)}_{i \in [n]} \in \mathbb{Z}^n : x_1 + \dots + x_n \leq k - m,\ 0 \leq x_i \Bigg{\}} \\
&= \Bigg{\{} \Bigg{(} y_i + m - \sum_{l=i}^n \delta_{l\in J}\Bigg{)}_{i\in [n]} \in \mathbb{Z}^n : 0\leq y_1 \leq \dots \leq y_n \leq k-m \Bigg{\}} \\ 
&= \underbrace{\Bigg{(} m - \sum_{l=i}^n \delta_{l\in J}\Bigg{)}_{i\in [n]}}_{=: s^J} + (k-m) \mathcal{O}(\Delta_n).
\end{align*}
For $J$ as above the shift is given by 
\begin{align*}
s^J = (\underbrace{0,\dots ,0}_{j_1}, \underbrace{1,\dots ,1}_{j_2-j_1}, \dots , \underbrace{j-1, \dots ,j-1}_{j_m-j_{m-1}}, \underbrace{m, \dots ,m}_{n-j_m}).
\end{align*}
We encode this shift by 
\begin{align*}
v^J := (1,\dots ,1,2,1,\dots ,1,2, \dots , 1, \dots ,1,2, 1, \dots ,1)\in S_{(m,n-m)},
\end{align*}
where there is a $2$ in the $j_1\text{th},\dots , j_m\text{th}$ component and $1$ elsewhere.
We describe the sum of the coordinates of $s^J$ in terms of inversions (cf.~\cite[Proposition~1.7.1]{Stanley}) of $v^J$ as follows: 
\begin{align}\label{eq:lambda(shift)}
\begin{split}
\lambda_n (s^J) &= \inv(v^J) + 1 +2+ \dots + m-1\\
&= \binom{n}{m}_q + \frac{m(m-1)}{2}.
\end{split}
\end{align}
Summation over all $J\subseteq [n]$, \eqref{eq:lambda(shift)} and Example~\ref{exa_q-Ehrhartseries_Chapoton}~(i) yield
\begin{align*}
\sum_{x\in k \hspace{0,02cm} \cp_n \cap  \mathbb{Z}^n} q^{\bar{\boldsymbol{\mu}}_{k,n}(x)}
&= \sum_{m=0}^n \sum_{\substack{J\subseteq [n],\\ |J|=m}} q^{\lambda_n (s^J)} \sum_{x\in (k-m)\mathcal{O}(\Delta_n)}
q^{\lambda_n(x)} \\
&= \sum_{m=0}^n q^{\frac{m(m-1)}{2}} \binom{n}{m}_q \binom{k-m+n}{n}_q
\end{align*}
which proves \eqref{eq:MacMahonB_identity2}.\\

By \eqref{eq:2.2.2nr.2} the weight functions $\bar{\boldsymbol{\mu}}_{\eta}$ are compatible with taking products, so for $n= \sum_{i=1}^r \eta_i$ we obtain
\begin{align*}
\sum_{x\in k \hspace{0,02cm} \cp_{\eta} \cap  \mathbb{Z}^n} q^{\bar{\boldsymbol{\mu}}_{k,\eta}(x)}
= \prod_{i=1}^r \sum_{x_i\in k \hspace{0,02cm} \cp_{\eta_i} \cap \mathbb{Z}^{\eta_i}} q^{\bar{\boldsymbol{\mu}}_{k,\eta_i}(x)}
= \prod_{i=1}^r \sum_{m=0}^{\eta_i} q^{\frac{m(m-1)}{2}} \binom{\eta_i}{m}_q \binom{k-m+\eta_i}{\eta_i}_q.
\end{align*}
Therefore,
\begin{align*}
\Ehr_{\cp_{\eta}, \bar{\boldsymbol{\mu}}}(q,t) 
= \sum_{k\geq 0} \left( \prod_{i=1}^r \sum_{m=0}^{\eta_i} \left( q^{\frac{m(m-1)}{2}} \binom{\eta_i}{m}_q \binom{k-m+\eta_i}{\eta_i}_q \right) \right) t^k.
\end{align*}
Setting $m=j$ the second equality of Theorem~\ref{MacMahonB} is proven. 
\end{proof}

\section{Properties of the generalised Eulerian numbers of types~$\mathsf{A}$ and~$\mathsf{B}$}\label{section:Properties}

In this section we leverage our dictionary of generalised Eulerian polynomials on the one hand and $h^*$-polynomials of suitable polytopes on the other hand to establish additional properties of the generalised Eulerian numbers of types~$\mathsf{A}$ and~$\mathsf{B}$.
We use the special case of Theorems~\ref{MacMahonA} and~\ref{MacMahonB} where $q=1$ to re-prove palindromicity of the generalised Eulerian numbers of type~$\mathsf{A}$ and real-rootedness of the Eulerian numbers of types~$\mathsf{A}$ and~$\mathsf{B}$.
These properties are known by \cite[Proposition~2.12]{OrbitDirichlet}, \cite{Frobenius}, and \cite{Brenti}.
Moreover, we obtain new results for the generalised Eulerian numbers of type~$\mathsf{B}$, which turn out to be palindromic and unimodal.

\begin{rem} \label{rem_palindromicity}
A polynomial $h(t)= h_n t^n + \dots + h_0 \in \mathbb{Q}[t]$ is \emph{palindromic} if its coefficients are symmetric, i.e.\ $h_k = h_{n-k}$ for all $1\leq k \leq \lfloor \frac{n}{2} \rfloor$.
\begin{itemize}
\item[(a)] The generating polynomial of the joint distribution of major index and descent statistic over $S_{\eta}$ is palindromic if and only if $\eta$ is a rectangle, i.e.\ $\eta=(m,\dots,m)$ for some $m\in \mathbb{N}$.
In this case, \cite[Proposition~2.12]{OrbitDirichlet} states that 
\begin{align}\label{S_eta_palindromic}
C_{S_{\eta}}(q^{-1},t^{-1}) = q^{-m^2\binom{r}{2}} t^{-m(r-1)} C_{S_{\eta}}(q,t).
\end{align}
For $q=1$ this reflects the fact that the $h^*$-polynomial of a polytope is palindromic if and only if the polytope is Gorenstein: $\Delta_{\eta}$ is Gorenstein if and only if all simplices $\Delta_{\eta_i}$ have the same dimension, i.e.\ $\eta_i=\eta_j$ for all $i,j\in [r]$, see Example~\ref{exa_Gorenstein}~(i) and~(ii).
In this case, \eqref{S_eta_palindromic} for $q=1$ follows by Proposition~\ref{thm_Gorenstein_functional_eq}.
\item[(b)] In contrast: the generating polynomial of the joint distribution of major index and descent statistic over $B_{\eta}$ is palindromic for all $\eta$.
This follows immediately from the bijection which switches signs:
\begin{align*}
\psi: B_{\eta} &\to B_{\eta} \\
w^{\epsilon} &\mapsto w^{-\epsilon},
\end{align*}
where $-\epsilon(i):=(-1)\epsilon(i)$ for all $i\in [n]$.\\
Under this map, $\Des(\psi(w^{\epsilon}))=[n-1]_0\setminus \Des(w^{\epsilon})$, and thus
\begin{align}\label{B_eta_palindromic}
C_{B_{\eta}}(q^{-1},t^{-1}) = q^{-\binom{n}{2}} t^{-n} C_{B_{\eta}}(q,t).
\end{align}
For $q=1$ again this reflects the fact that $\cp_{\eta}$ is Gorenstein of index $1$, since every $\cp_{\eta_i}$ is, see Example~\ref{exa_Gorenstein}~(iii) and~(iv), and thus by Proposition~\ref{thm_Gorenstein_functional_eq} \eqref{B_eta_palindromic} holds for $q=1$.
\end{itemize}
\end{rem}

Next we prove unimodality of the generalised Eulerian numbers of type~$\mathsf{B}$ (equivalently, of the $h^*$-polynomial of products of cross polytopes), i.e.\ $$h^*_0 \leq \dots \leq h^*_{k-1} \leq h^*_k \geq h^*_{k+1} \geq \dots \geq h^*_n,$$ for $k= \lfloor \frac{n}{2}\rfloor \in\mathbb{N}$ by showing that products of cross polytopes are (reflexive and) what is called \emph{anti-blocking}. The $h^*$-polynomials of such polytopes are known to be unimodal by \cite[Theorem~3.4]{KohlOlsenSanyal} and \cite[Theorem~1]{BrunsRoemer}; see also \cite{BrunsRoemer} for the relevant definitions.

\begin{thm}[{\cite[Theorem~1]{BrunsRoemer}}]\label{BrunsRoemer_unimodal}
Let $\mathcal{P}$ be a Gorenstein polytope with a regular, unimodal triangulation. Then the $h^*$-polynomial of $\mathcal{P}$ is unimodal.
\end{thm}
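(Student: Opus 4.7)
The plan is to combine the $h$-vector calculus of unimodular triangulations with Stanley's $g$-theorem for simplicial polytopes, following the strategy of Bruns and R\"omer.

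First I would fix a regular unimodular triangulation $\Delta$ of $\mathcal{P}$. Because every maximal simplex in $\Delta$ has normalised volume one, a standard half-open decomposition argument identifies the $h^*$-polynomial of $\mathcal{P}$ with the combinatorial $h$-polynomial of the simplicial complex $\Delta$ (this is Stanley's classical identification of $h^*$ and $h$ for unimodular triangulations). Thus it suffices to prove that the $h$-vector of $\Delta$ is unimodal.

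Next I would pass to the algebraic side. The regularity of $\Delta$ provides a flat Gr\"obner-style degeneration from the Gorenstein semigroup ring $k[\mathcal{P}]$ to the Stanley-Reisner ring $k[\Delta]$; both rings share the same Hilbert series and hence the same $h$-vector, and Gorenstein-ness descends along the degeneration, so that $\Delta$ is a Gorenstein simplicial complex (equivalently, after removing any cone point, a homology sphere). Choosing a linear system of parameters $\theta_1,\dots,\theta_d \in k[\Delta]_1$ and forming the Artinian reduction $A = k[\Delta]/(\theta_1,\dots,\theta_d)$, the Hilbert series of $A$ is exactly the $h$-polynomial of $\Delta$ by Stanley's theorem on Cohen-Macaulay complexes, and $A$ is itself Gorenstein.

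The final step is to invoke the $g$-theorem: it suffices to show that $A$ admits a \emph{Lefschetz element}, i.e.\ a linear form $\ell \in A_1$ for which $\cdot\ell\colon A_i \to A_{i+1}$ is injective for $i \leq d/2$. Bruns and R\"omer achieve this by realising $A$ as the reduction of the graded algebra of a rational simplicial polytope built from the data of $\mathcal{P}$ and its regular triangulation, and then applying Stanley's hard Lefschetz theorem to that polytope. Once the Lefschetz property holds, the inequalities $h_0 \leq h_1 \leq \dots \leq h_{\lfloor d/2 \rfloor}$ are immediate, and the palindromic symmetry of the Gorenstein $h$-vector finishes the proof. The main obstacle is precisely the construction of the Lefschetz element: Gorenstein simplicial complexes by themselves need not have unimodal $h$-vectors, and it is the extra regularity of the triangulation that injects convex-polytope geometry into $\Delta$, making Stanley's hard Lefschetz theorem applicable. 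Every other ingredient (the identification $h^*(\mathcal{P}) = h(\Delta)$, Cohen-Macaulayness of $k[\Delta]$, palindromicity from Gorenstein-ness) is comparatively standard.
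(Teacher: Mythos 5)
First, a point of context: the paper does not prove this statement at all --- it is quoted verbatim from Bruns--R\"omer and used as a black box in the proof of unimodality of the generalised Eulerian numbers of type~$\mathsf{B}$. (Note also that ``unimodal triangulation'' in the statement is surely a typo for ``unimodular triangulation''; your reading of it as unimodular is the correct one, and your first step, the Betke--McMullen/Stanley identification $h^*(\mathcal{P})=h(\Delta)$ for a unimodular triangulation $\Delta$, is fine, as is the passage to an Artinian reduction of the Cohen--Macaulay ring $k[\Delta]$.)

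There are, however, two genuine gaps in the middle of your argument. The first is the claim that Gorenstein-ness ``descends along the degeneration'' so that $\Delta$ is a Gorenstein complex and $A$ is a Gorenstein Artinian algebra. A Gr\"obner degeneration preserves the Hilbert series, and Cohen--Macaulayness or Gorenstein-ness of the \emph{special} fibre implies the same for the general fibre, but not conversely; a triangulation of a polytope is a triangulated ball, and the Stanley--Reisner ring of a ball is in general not Gorenstein (it is Gorenstein only when the ball is a cone over a homology sphere). So you cannot conclude that $A$ is Gorenstein, and consequently the symmetry $h_i=h_{d-i}$ that you invoke at the end must instead be extracted directly from the Gorenstein-ness of $k[\mathcal{P}]$ (which does give a symmetric $h^*$-vector), not from $A$. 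The second and more serious gap is that the entire content of the theorem --- producing the simplicial polytope to which Stanley's hard Lefschetz/$g$-theorem applies --- is deferred to ``Bruns and R\"omer achieve this by realising $A$ as \dots''. In a blind proof this is circular. The actual mechanism is Athanasiadis's notion of a \emph{special simplex}: using the Gorenstein property (the generator of the canonical module of the cone over $\mathcal{P}$) together with unimodularity of $\Delta$, one finds a face $\Sigma$ of $\Delta$ whose vertices meet every facet of $\mathcal{P}$ exactly once; one then shows $h^*(\mathcal{P})=h(\mathrm{link}_\Delta(\Sigma))$, and regularity of $\Delta$ guarantees that this link is the boundary complex of a simplicial polytope, to which the $g$-theorem applies. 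Without the special-simplex construction (or an equivalent substitute) the Lefschetz element you need does not materialise, since Gorenstein complexes in general need not have unimodal $h$-vectors --- a point you yourself correctly flag as the main obstacle.
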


Let $\mathcal{P}_+$ denote the intersection of $\mathcal{P}$ with $\mathbb{R}^n_+ :=\{ x\in\mathbb{R}^n : 0 \leq x_1,\dots ,x_n \}$, viz.\ the first orthant. The polytope $\mathcal{P}_+$ is \emph{anti-blocking} if, for any $x\in \mathcal{P}_+$, $y\in \mathbb{R}^n$ with $0\leq y_i\leq x_i$ for all $i$, we have $y\in \mathcal{P}_+$.
For $\sigma \in \{\pm 1\}^n$ and $x\in \mathbb{R}^n$ we denote by $\sigma x$ their componentwise product $(\sigma_1 x_1, \dots ,\sigma_n x_n)\in \mathbb{R}^n$.
A polytope $\mathcal{P}$ is \emph{locally anti-blocking} if $(\sigma\mathcal{P})\cap \mathbb{R}^n_+$ is anti-blocking for every $\sigma \in \{\pm 1\}^n$.

\begin{thm}[{\cite[Theorem~3.4]{KohlOlsenSanyal}}]\label{KohlOlsenSanyal_unimodal}
If $\mathcal{P}$ is a reflexive and locally anti-blocking polytope, then $\mathcal{P}$ has a regular, unimodal triangulation. In particular, the $h^*$-polynomial of $\mathcal{P}$ is unimodal.
\end{thm}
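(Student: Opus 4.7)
The plan is to reduce the claim to Theorem~\ref{BrunsRoemer_unimodal}: since $\mathcal{P}$ is reflexive, Remark~\ref{Remark:Reflexive_Gorenstein}~(i) says that $\mathcal{P}$ is Gorenstein of index~$1$. Thus, once $\mathcal{P}$ is shown to carry a regular unimodular triangulation, the unimodality of $h^*(\mathcal{P};t)$ is immediate. The whole weight of the argument therefore lies in producing such a triangulation.

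The natural starting point is the decomposition of $\mathcal{P}$ into its orthant cells $\mathcal{P}_\sigma:=\mathcal{P}\cap\sigma\mathbb{R}_+^n$ for $\sigma\in\{\pm 1\}^n$. By the locally anti-blocking hypothesis, each $\sigma\mathcal{P}_\sigma=(\sigma\mathcal{P})\cap\mathbb{R}_+^n$ is anti-blocking. I would then invoke the classical fact that every full-dimensional anti-blocking lattice polytope in $\mathbb{R}_+^n$ admits a regular unimodular triangulation (for example, a reverse-lexicographic pulling triangulation from the origin; this is where the downward-closed structure of anti-blocking polytopes is essential). Pulling the triangulation back through the reflection $x\mapsto\sigma x$ yields a unimodular triangulation $\mathcal{T}_\sigma$ of the cell $\mathcal{P}_\sigma$.

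The next step is to glue the $\mathcal{T}_\sigma$ into a single triangulation $\mathcal{T}$ of $\mathcal{P}$. The crucial observation is that, for two adjacent orthants $\sigma,\sigma'$ differing only in the $i$-th sign, the common facet $\mathcal{P}_\sigma\cap\mathcal{P}_{\sigma'}\subseteq\{x_i=0\}$ is intrinsic to the hyperplane (independent of $\sigma_i$), and the pulling triangulation restricts on either side to the pulling triangulation of that facet. Hence the $\mathcal{T}_\sigma$ agree on all shared facets, and $\mathcal{T}:=\bigcup_\sigma\mathcal{T}_\sigma$ is a well-defined unimodular triangulation of $\mathcal{P}$.

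The genuine obstacle is regularity, since this is a global condition: one needs a single convex height function on $\mathcal{P}\cap\mathbb{Z}^n$ whose lower envelope cuts out exactly $\mathcal{T}$. My approach would be to fix one reverse-lexicographic height $\omega_+$ on $\mathbb{Z}_+^n$ that realises the pulling triangulation on every $(\sigma\mathcal{P})\cap\mathbb{R}_+^n$ simultaneously, and then to set $\omega(x):=\omega_+(|x|)+\varepsilon\sum_{i=1}^n x_i^2$ with $\varepsilon>0$ sufficiently small. The strictly convex perturbation forces the lower envelope to bend along each coordinate hyperplane, thereby separating the cells $\mathcal{P}_\sigma$, while smallness of $\varepsilon$ preserves the combinatorics inside each cell. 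Verifying that $\omega_+$ can be chosen uniformly in $\sigma$ and that no accidental refinement of $\mathcal{T}$ is introduced is the delicate part of the argument. Once this is established, Theorem~\ref{BrunsRoemer_unimodal} applied to the Gorenstein polytope $\mathcal{P}$ concludes the proof.
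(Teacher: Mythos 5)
First, a point of comparison: the paper does not prove this statement at all --- it is imported verbatim as \cite[Theorem~3.4]{KohlOlsenSanyal} --- so there is no internal argument to measure your proposal against; what follows is an assessment of your sketch on its own terms. The load-bearing step of your plan is the asserted ``classical fact'' that every full-dimensional anti-blocking lattice polytope admits a regular unimodular triangulation, for instance a reverse-lexicographic pulling triangulation from the origin. This is not a classical fact, and the specific construction you name fails. The dilate $2\Delta_3=\{x\in\mathbb{R}^3_+ : x_1+x_2+x_3\leq 2\}$ is anti-blocking, but pulling at the origin first cones over a triangulation of the opposite facet $\conv\{2e_1,2e_2,2e_3\}$ and necessarily produces cells such as $\conv\{0,2e_1,e_1+e_2,e_1+e_3\}$, of normalised volume $2$. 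Indeed $2\Delta_3$ is not compressed (it has lattice width $2$ with respect to the facet $x_1+x_2+x_3=2$), so not every pulling order is unimodular, and for a general anti-blocking lattice polytope the existence of \emph{any} regular unimodular triangulation is not something one can simply cite.

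The symptom of this gap is that your construction of the triangulation never uses reflexivity: it enters only at the very end, to pass from ``reflexive'' to ``Gorenstein of index $1$'' before invoking Theorem~\ref{BrunsRoemer_unimodal}. In the cited source the reflexivity hypothesis does real work in building the triangulation --- Kohl, Olsen and Sanyal first establish a structure result for reflexive locally anti-blocking polytopes (the orthant pieces arise in polar pairs of anti-blocking polytopes), and it is this additional structure, not anti-blockingness alone, that is exploited to produce compatible unimodular triangulations of the pieces. Your gluing step (agreement of the triangulations on shared coordinate-hyperplane facets) and your regularity step (perturbing a common height $\omega_+(|x|)$ by $\varepsilon\sum_i x_i^2$) are reasonable in outline, though you would still have to verify that $\omega_+(|x|)$ --- a convex function precomposed with $x\mapsto|x|$, which need not be convex --- admits a lower envelope inducing exactly the intended cells; but both steps rest on the unproved first one. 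As a minor point, ``unimodal triangulation'' in the statement (and in Theorem~\ref{BrunsRoemer_unimodal}) should read ``unimodular triangulation''; you silently and correctly make that substitution.
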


\begin{prop}
The generalised Eulerian numbers of type~$\mathsf{B}$ are unimodal.
In particular, the Eulerian numbers of type~$\mathsf{B}$ are unimodal.
\end{prop}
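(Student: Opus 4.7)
The plan is to invoke Theorem~\ref{KohlOlsenSanyal_unimodal} applied to the polytope $\cp_{\eta}$. Setting $q=1$ in Theorem~\ref{MacMahonB} identifies the generalised Eulerian numbers of type~$\mathsf{B}$ with the coefficients of the $h^*$-polynomial of $\cp_{\eta}$, so it suffices to show that $\cp_{\eta}$ is reflexive and locally anti-blocking. Reflexivity is already provided by Example~\ref{exa_Gorenstein}~(iv), so the only nontrivial task is to verify the locally anti-blocking property.

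First I would check this for a single cross polytope: since $\cp_n = \{x \in \mathbb{R}^n : |x_1|+\dots+|x_n| \leq 1\}$ is invariant under every sign change $x \mapsto \sigma x$ for $\sigma \in \{\pm 1\}^n$, the intersection $(\sigma \cp_n) \cap \mathbb{R}^n_+$ equals $\cp_n \cap \mathbb{R}^n_+ = \Delta_n$. The standard simplex is visibly anti-blocking: if $0 \leq y_i \leq x_i$ and $\sum x_i \leq 1$, then $\sum y_i \leq 1$. Hence each $\cp_{\eta_i}$ is locally anti-blocking.

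Next I would show that being locally anti-blocking is preserved under products. Given $\sigma = (\sigma_1,\dots,\sigma_r) \in \{\pm 1\}^n$ written blockwise according to the composition $\eta$, one has
\[
(\sigma\, \cp_{\eta}) \cap \mathbb{R}^n_+ \;=\; \prod_{i=1}^r \bigl((\sigma_i\, \cp_{\eta_i}) \cap \mathbb{R}^{\eta_i}_+\bigr),
\]
so the anti-blocking property for each factor implies it for the product (if $0 \leq y \leq x$ componentwise and $x$ lies in the product, then each block of $y$ lies in the corresponding factor). Therefore $\cp_{\eta}$ is locally anti-blocking.

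Combining reflexivity and the locally anti-blocking property, Theorem~\ref{KohlOlsenSanyal_unimodal} yields a regular unimodal triangulation of $\cp_{\eta}$ and hence unimodality of $h^*(\cp_{\eta};t)$. By Theorem~\ref{MacMahonB} at $q=1$ this is precisely unimodality of $d_{B_{\eta}}(t)$, which specialises to the Eulerian numbers of type~$\mathsf{B}$ at $\eta=(1,\dots,1)$. The only subtle point is the product step, and it reduces to the block decomposition above, so no substantial obstacle is anticipated.
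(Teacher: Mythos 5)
Your proposal is correct and follows essentially the same route as the paper: both verify that $\cp_{\eta}$ is reflexive and locally anti-blocking (via $\sigma\cp_n\cap\mathbb{R}^n_+=\Delta_n$, the anti-blocking property of the simplex, and the blockwise product decomposition) and then invoke Theorem~\ref{KohlOlsenSanyal_unimodal} together with Theorem~\ref{MacMahonB} at $q=1$. Your write-up is merely a bit more explicit about the product step than the paper's one-line ``this extends to products'' remark.
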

\begin{proof}
Clearly, the cross polytope is reflexive and locally anti-blocking: for every $\sigma \in \{\pm 1\}^n$, $\sigma\cp_n \cap \mathbb{R}^n_+ = \Delta_n$ and for $0\leq y_i \leq x_i$ with $y\in \mathbb{R}^n_+$ and $x\in\Delta_n$, it follows that $y\in\Delta_n$.
This extends to products of cross polytopes, so $\cp_{\eta}$ is locally anti-blocking. Thus, by Theorems~\ref{BrunsRoemer_unimodal} and~\ref{KohlOlsenSanyal_unimodal} the $h^*$-vector of $\cp_{\eta}$ is unimodal and so the generalised Eulerian numbers of type~$\mathsf{B}$ are unimodal.
\end{proof}

Further it is known that the Eulerian numbers (of type~$\mathsf{A}$) can be interpreted as the $h$-vector of the barycentric subdivision of the boundary of the simplex; cf.~\cite[Theorem~2.2]{BrentiWelker}. 
Interpreting the simplex as a type-$\mathsf{A}$ polytope and the cross polytope a type-$\mathsf{B}$ analogue is supported by the following proposition:
\begin{prop}[{\cite{Bjoerner},\cite[Theorem~2.3]{Brenti}}]
The $h$-vector of the barycentric subdivision of the boundary of the cross polytope is given by the Eulerian numbers of type~$\mathsf{B}$.
\end{prop}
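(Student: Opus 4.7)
The plan is to identify $\sd(\partial \cp_n)$ with the order complex of the proper part of the face lattice $L$ of $\cp_n$, and then read off its $h$-polynomial as a descent generating function over $B_n$ via an EL-labeling.

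Let $L$ denote the face lattice of $\cp_n$ with $\hat{0} = \emptyset$ and $\hat{1} = \cp_n$ adjoined. Its proper nonempty elements are the faces of $\partial \cp_n$, identified with signed subsets of $[n]$, i.e.\ subsets of $\{\pm 1, \ldots, \pm n\}$ containing no antipodal pair. Then $\sd(\partial \cp_n)$ is precisely the order complex $\Delta(L \setminus \{\hat{0}, \hat{1}\})$, and maximal chains in $L$ are in natural bijection with $B_n$: a chain $\hat{0} \prec F_1 \prec \cdots \prec F_n \prec \hat{1}$ records, via the signed vertex added at each cover, a signed permutation $w = w_1 \cdots w_n \in B_n$.

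I would then construct an EL-labeling $\lambda$ of $L$ tuned to the type-$\mathsf{B}$ descent statistic. Order the vertex labels as $\bar{n} < \cdots < \bar{1} < 1 < \cdots < n$ and label each cover $F \prec F \cup \{v\}$ by $v$. The final cover $F_n \prec \hat{1}$ requires a dedicated label, which I would choose so that its contribution encodes the initial type-$\mathsf{B}$ descent condition $\epsilon(1) = -1$ (equivalently $0 \in \Des(w)$). By Bj\"orner's theorem on EL-shellable posets,
\[
h(\sd(\partial \cp_n), t) = \sum_{m} t^{d(\lambda(m))},
\]
summed over maximal chains $m$ of $L$, with $d(\lambda(m))$ the number of descents in the label sequence. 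A direct case analysis, using Definition~\ref{dfn:descent_set-B_eta}, confirms that under this ordering the descent count of the labelled chain corresponding to $w \in B_n$ matches $\des(w)$: the ``both positive'', ``both negative'', and ``$+ \to -$'' cases in positions $1, \ldots, n-1$ are detected by the ordering, while the designated top label accounts for the $0$-descent. This yields
\[
h(\sd(\partial \cp_n), t) = \sum_{w \in B_n} t^{\des(w)} = d_{B_n}(t),
\]
which is exactly the Eulerian polynomial of type~$\mathsf{B}$.

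The main obstacle is constructing the EL-labeling so that (i) it satisfies the EL-property on every rank-$2$ interval of $L$ (these split into Boolean-lattice intervals with an extra sign choice), and (ii) it simultaneously encodes the $0$-descent at the top cover, since $L$ has no natural lower rank to host this comparison. A cleaner alternative---bypassing the ad hoc top label---is to identify $\sd(\partial \cp_n)$ with the Coxeter complex $\Sigma(B_n)$ and invoke the classical theorem $h(\Sigma(W), t) = \sum_{w \in W} t^{\des_W(w)}$ for any finite Coxeter group $(W, S)$. Combined with the coincidence of the Coxeter-theoretic and combinatorial descent statistics on $B_n$ (cf.~\cite[Proposition~8.1.2]{BjBr}), this delivers the proposition at once.
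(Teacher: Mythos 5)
The paper does not give its own proof of this proposition: it is quoted from the literature, with the remark that it follows from Theorems~1.6 and~2.1 and Proposition~1.2 of Bj\"orner's paper on Coxeter complexes (together with Brenti's Theorem~2.3). Your ``cleaner alternative'' at the end --- identify $\sd(\partial\cp_n)$ with the Coxeter complex $\Sigma(B_n)$ (equivalently, with the order complex of the proper part of the face lattice, which is anti-isomorphic to that of the dual cube) and invoke $h(\Sigma(W),t)=\sum_{w\in W}t^{\des_W(w)}$ together with \cite[Proposition~8.1.2]{BjBr} --- is precisely that citation chain, so in substance your proposal lands on the same argument the paper relies on.

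Your first, hands-on route is not complete as written, and the gap is exactly the one you flag yourself: you never construct the EL-labeling. Labelling the covers $F\prec F\cup\{v\}$ by the added signed vertex under the order $\bar n<\cdots<\bar 1<1<\cdots<n$ does record the internal descents of $w$, but the type-$\mathsf{B}$ statistic of Definition~\ref{dfn:descent_set-B_eta} also counts the position $0$ (i.e.\ $\epsilon(1)=-1$), and the face lattice of $\cp_n$ offers no cover below the first vertex to carry that comparison; parking it on the top cover $F_n\prec\hat 1$ changes which \emph{positions} of the label word are descents, so one would still have to verify both the EL-property on rank-two intervals and that the resulting descent count equals $\des(w)$ rather than some cyclic shift of it. None of this is checked, so that half of the proposal should be regarded as a heuristic only; the proposition stands on the Coxeter-complex argument, which is sound.
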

\noindent As Brenti pointed out in \cite{BrentiWelker}, the above proposition is obtained from Theorems~1.6 and~2.1 and Proposition~1.2 in \cite{Bjoerner}.
Using \cite[Theorem~3.1]{BrentiWelker} this leads to the following corollary.
\begin{cor}
The Eulerian polynomials of types~$\mathsf{A}$ and~$\mathsf{B}$ only have real roots. In particular, the sequences of their coefficients are unimodal.
\end{cor}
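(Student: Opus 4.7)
The plan is to combine the two statements immediately preceding this corollary with \cite[Theorem~3.1]{BrentiWelker}, which asserts that if a simplicial complex $\Delta$ has a non-negative $h$-vector, then the $h$-polynomial of its barycentric subdivision $\sd(\Delta)$ is real-rooted. By \cite[Theorem~2.2]{BrentiWelker} the Eulerian polynomial of type~$\mathsf{A}$ coincides with the $h$-polynomial of $\sd(\partial\Delta_n)$, and by the proposition just quoted the Eulerian polynomial of type~$\mathsf{B}$ coincides with the $h$-polynomial of $\sd(\partial\cp_n)$.

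First I would check the input hypothesis of \cite[Theorem~3.1]{BrentiWelker}: both $\partial\Delta_n$ and $\partial\cp_n$ are boundary complexes of simplicial polytopes, hence shellable, and so their $h$-vectors are non-negative (in fact palindromic of the form $(1,1,\dots,1)$ in the simplex case). Feeding this into \cite[Theorem~3.1]{BrentiWelker} then yields real-rootedness of the two Eulerian polynomials directly.

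For the ``in particular'' claim I would invoke the classical fact, a direct consequence of Newton's inequalities, that any polynomial with non-negative coefficients and only real roots has a log-concave, and therefore unimodal, coefficient sequence; both Eulerian polynomials have non-negative integer coefficients by construction. The whole argument is a chain of citations, so there is no genuine obstacle --- the only point that would merit care is matching the conventions for $h$-polynomials and barycentric subdivisions used in \cite{Bjoerner} and \cite{BrentiWelker} to those of the present paper, and confirming that the type~$\mathsf{A}$ half is indeed covered (not just the type~$\mathsf{B}$ half explicitly stated) by the preceding proposition together with \cite[Theorem~2.2]{BrentiWelker}.
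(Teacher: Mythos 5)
Your proposal is correct and follows exactly the route the paper intends: the paper gives no written proof but states that the corollary follows from the two preceding statements (the $h$-vector interpretations of the type-$\mathsf{A}$ and type-$\mathsf{B}$ Eulerian numbers via barycentric subdivisions of $\partial\Delta_n$ and $\partial\cp_n$) together with \cite[Theorem~3.1]{BrentiWelker}. Your additional care in verifying the non-negativity hypothesis via shellability and in deducing unimodality from real-rootedness and non-negative coefficients is exactly the implicit content of the paper's citation chain.
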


Computations with SageMath~\cite{sage} show that the generalised Eulerian polynomials of types~$\mathsf{A}$ and~$\mathsf{B}$ only have real roots (at least up to $n=8$, see Appendizes~\ref{Liste_EulerianPolynomialsA} and~\ref{Liste_EulerianPolynomialsB}).
This leads to the following conjecture:
\begin{con}
The generalised Eulerian numbers of types~$\mathsf{A}$ and~$\mathsf{B}$ only have real roots.
\end{con}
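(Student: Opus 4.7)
The plan is to adapt the interlacing-families technique (Br\"and\'en, Savage--Visontai) to the signed multiset setting. For $c$ ranging over the possible values of the standardised last letter $\std(w)_n\in\{\pm 1,\dots,\pm n\}$, I would define a refined descent polynomial
\begin{align*}
d_{B_{\eta}}^{(c)}(t):=\sum_{\substack{w\in B_{\eta}\\ \std(w)_n=c}} t^{\des(w)},
\end{align*}
so that $d_{B_{\eta}}(t)=\sum_c d_{B_{\eta}}^{(c)}(t)$. The goal is to prove by induction on $n$ that $\{d_{B_{\eta}}^{(c)}\}_c$ is an interlacing family of polynomials with only real, nonpositive roots; the standard sum-of-interlacing-polynomials lemma then yields real-rootedness of $d_{B_{\eta}}(t)$ itself.

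The key inductive step is the construction of a linear operator $T$ expressing $(d_{B_{\eta}}^{(c)})_c$ in terms of the refinement $(d_{B_{\eta'}}^{(c')})_{c'}$, where $\eta'$ is obtained from $\eta$ by decreasing one entry by $1$. Concretely, one removes the last letter of $w\in B_{\eta}$ and records the previous last letter's standardised value and sign; depending on whether reappending the removed letter creates a new descent in the sense of Definition~\ref{dfn:descent_set-B_eta}, the corresponding entry of $T$ is $1$ or $t$. I would then verify that $T$ preserves the interlacing-family property, by checking that each column of $T$ is a nonnegative combination of operations known to preserve real-rootedness (e.g.\ insertions in the Savage--Visontai sense). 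The base case $\eta=(1)$ gives $d_{B_{(1)}}(t)=1+t$, trivially real-rooted, and the inductive step may be reconciled with the classical real-rootedness of $d_{B_n}(t)$ (cf.~\cite{Brenti}) when $\eta=(1,\dots,1)$ to act as a sanity check.

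The main obstacle will be the asymmetry of Definition~\ref{dfn:descent_set-B_eta}: a descent at position $0$ is triggered by $\epsilon(1)=-1$, and the sign-change rules at adjacent positions contribute differently than unsigned comparisons. This may cause the naive refinement by $\std(w)_n$ alone to fail the interlacing property. A remedy would be to refine more finely by the pair $(\std(w)_n,\epsilon(n))$, or by a triple that also records $\epsilon(1)$, at the cost of a larger operator $T$ but with a cleaner recursion. A complementary route on the polytope side would seek a stable (in the Borcea--Br\"and\'en sense) multivariate $h^*$-polynomial of $\cp_{\eta}$ whose univariate specialisation recovers $d_{B_{\eta}}(t)$; however, neither reflexivity nor the locally anti-blocking property (Example~\ref{exa_Gorenstein}) alone implies real-rootedness, so such a polytopal route would most likely still need the combinatorial recursion above.
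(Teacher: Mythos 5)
This statement is a \emph{conjecture} in the paper, not a theorem: the author offers only computational evidence (SageMath verification up to $n=8$, with the polynomials listed in Appendix~\ref{Liste_EulerianPolynomialsB}) and no proof. So there is no argument in the paper to compare yours against; a complete proof along your lines would resolve an open problem rather than reprove a known result.

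That said, what you have written is a research programme, not a proof, and the gap is precisely at the point where the mathematical content would have to live. Every step that would actually establish real-rootedness is stated in the conditional: you say you ``would define'' the refinement, that ``the goal is to prove'' it is an interlacing family, and that you ``would then verify'' that the transfer operator $T$ preserves interlacing --- but none of these verifications is carried out, and you yourself identify the obstruction that makes them nontrivial. Concretely: in the Savage--Visontai/Br\"and\'en framework one must check that the columns of $T$ satisfy the compatibility conditions guaranteeing preservation of interlacing sequences, and this check depends delicately on exactly which positions of $w$ can receive the new letter without disturbing descents elsewhere. For \emph{multiset} permutations the standardisation map $\std$ means that inserting another copy of an already-present letter shifts the standardised values of all later copies, so the descent set of the extended word is not determined by $(\std(w)_n,\epsilon(n))$ alone; and the position-$0$ descent triggered by $\epsilon(1)=-1$ (Definition~\ref{dfn:descent_set-B_eta}) is a global condition that a last-letter refinement does not see. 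Your proposed remedies (refining by a pair or triple) are plausible but unverified, and until the interlacing of the refined family is actually proved for some choice of refinement, the conjecture remains open. The polytopal route you mention is correctly discounted: reflexivity, the Gorenstein property, and the locally anti-blocking property established in Section~\ref{section:Properties} yield palindromicity and unimodality of the $h^*$-vector, but none of these implies real-rootedness.
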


\section{Further generalisations -- coloured multiset permutations}\label{section:Generalisation}

Multivariate generalisations of Example~\ref{exa-q-Ehrhartseries}~(ii), Corollary~\ref{Cor_hypercubein0}, and Corollary~\ref{Cor_colouredperm_cube} are developed in \cite{BeckBraun}. 
Considering the Coxeter-theoretic background of the descent polynomials of permutations and signed permutations, it may seem natural to seek a type-$\mathsf{D}$ analogue of MacMahon's formula. A first step towards this would be to find an $n$-dimensional polytope $\mathcal{P}_n$ such that
\begin{align*}
\frac{\sum_{w\in D_n} t^{\des(w)}}{(1-t)^{n+1}} = \Ehr_{\mathcal{P}_n}(t).
\end{align*}
However, at present, we do not know how to generalise elements in $D_n$ to even signed multiset permutations without losing the product structure of the corresponding Ehrhart series.\\

Another natural way for a generalisation comes from considering $B_n$ as the wreath product of the cyclic group of order two by the symmetric group. This leads to the study of coloured permutations $S_n^c:=\mathbb{Z}/c\mathbb{Z} \wr S_n$, for $c\in\mathbb{N}_0$, and, more generally, coloured multiset permutations.
In Proposition~\ref{prop_colouredmultisetperm} we show that, if the composition $\eta$ has only `small' parts, a `coloured MacMahon's formula' holds. 
This means that the descent polynomial over coloured (multiset) permutations can be interpreted as an $h^*$-polynomial of a polytope.\\
Similarly to the description of signed multiset permutations in terms of pairs $(w,\epsilon)$ of multiset permutations and signed vectors, we define coloured multiset permutations. 
We denote a \emph{coloured multiset permutation} $w^{\gamma}:=(w,\gamma)$ by the indexed permutation $w^{\gamma}~=~w_1^{\gamma_1} \cdots w_n^{\gamma_n}$, where $w\in S_{\eta}$ 
and $\gamma\colon [n] \to \{0,\dots,c-1\}$. We write $S_{\eta}^c$ for the set of all coloured multiset permutations.

\begin{dfn}\label{dfn:descent_coloured_perm}
Fixing the ordering $$r^{c-1} < \dots < 1^{c-1} < \dots < r^1 < \dots < 1^1 < 1^0 < \dots < r^0,$$ we define a \emph{descent} statistic as
\begin{align}\label{eq:des}
\des(w^{\gamma}) = | \{ i\in [n-1]_0 :\ & \gamma_i=\gamma_{i+1}=0 \text{ and } w_i> w_{i+1},\nonumber\\
 \text{ or } & \gamma_i=\gamma_{i+1}>0 \text{ and } w_i \leq w_{i+1}, \\ 
 \text{ or } & \gamma_i<\gamma_{i+1} \}|\nonumber,
\end{align}
where $w_0^{\gamma_0} := 0^0$.
\end{dfn}
For instance, for $\eta=(1,2)$ and $c=3$ the number of descents of $1^2 2^1 2^1 \in S_{(1,2)}^3$ is $\des(1^2 2^1 2^1)=2$, since $2^2 < 1^2 < 2^1 < 1^1 < 1^0 < 2^0$. 
In the special case of $c=2$ we obtain the descent statistic from Section~\ref{section:subsignedmultisetperm} and for $c=1$ this reduces to descents on multiset permutations, cf.\ Section~\ref{section:submultisetperm}.
To simplify notation we abbreviate a coloured permutation $w^{\gamma}$ to $w$.

\begin{dfn}\label{dfn:descentPolynomialColouredPerm}
For $c\in\mathbb{N}$ and $\eta$ a composition of some $n\in\mathbb{N}$ we denote by
\begin{align*}
d_{S_{\eta}^c}(t) := \sum_{w \in S_{\eta}^c} t^{\des(w)}
\end{align*}
the \emph{descent polynomial of $S_{\eta}^c$}.
\end{dfn}
For instance, for $\eta = (2)$ and $c=3$ we have
\begin{align*}
d_{S_{(2)}^3}(t) = 3t^2+5t+1.
\end{align*}
In a special case we are able to show that the descent polynomial of $S_{\eta}^c$ is an $h^*$-polynomial of a product of certain  polytopes, see Proposition~\ref{prop_colouredmultisetperm}. 
Intuitively, compared to $B_{\eta}$ we increase the number of negatives by adding colours. 
This leads to a product of polytopes of the following type:
\begin{dfn}\label{dfn:distorted_cp}
For $c,n\in \mathbb{N}$ we denote by
\begin{align*}
\mathcal{C}_n^c := \conv \{e_1,\dots,e_n, -(c-1) e_1, \dots ,-(c-1) e_n\}
\end{align*}
a \emph{distorted cross polytope} in $\mathbb{R}^n$.
\end{dfn}
\noindent As an example, $\mathcal{C}_2^3$ is illustrated in Figure~\ref{fig:2}. 

\begin{figure}
\centering
\includegraphics[width=0.33\textwidth]{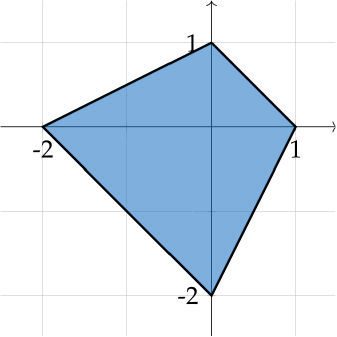}
\caption{The distorted cross polytope $\mathcal{C}_2^3$.}
\label{fig:2}
\end{figure}

\begin{pro}\label{Ques1}
Formulate a permutation statistic $\stat \colon S_{\eta}^c \to \mathbb{N}_0$ such that
\begin{align}\label{eq:ques}
\frac{\sum_{w \in S_{\eta}^c} t^{\stat(w)}}{(1-t)^{n+1}} = \Conv_{i=1}^r \Ehr_{\mathcal{C}_{\eta_i}^c} (t).
\end{align}
\end{pro}

\noindent In the special case of $\eta = (\eta_1,\dots ,\eta_r)$ with $\eta_i\leq 2$ for every $i$, we give such a formulation as in Problem~\ref{Ques1}.
\begin{prop}\label{prop_colouredmultisetperm}
For $\eta = (\eta_1,\dots ,\eta_r)$ with $\eta_i\leq 2$ for every $i$, we have
\begin{align} \label{eq:prop}
\frac{d_{S_{\eta}^c}(t)}{(1-t)^{n+1}} = \Conv_{i=1}^r \Ehr_{\mathcal{C}_{\eta_i}^c} (t).
\end{align}
More precisely, for $\eta = (\underbrace{1,\dots ,1}_{l},\underbrace{2,\dots ,2}_{r-l})$ for some $l \in [r]_0$ we obtain
\begin{align*}
\frac{\sum_{w\in S_{\eta}^c} t^{\des(w)}}{(1-t)^{n+1}} = \sum_{k\geq 0} (ck+1)^l \left( \frac{c^2}{2}k^2 + \frac{c+2}{2}k +1 \right)^{r-l} t^k 
= \Ehr_{(\mathcal{C}_1^c)^l\times (\mathcal{C}_2^c)^{r-l}}(t).
\end{align*}
\end{prop}
\noindent The proof is omitted. It uses barred permutations and is similar to the one of Theorem~\ref{MacMahonB}.

For instance, for $\eta=(1,2)$ and $c=3$ the corresponding polytope is $\mathcal{C}_1^3\times \mathcal{C}_2^3$, which is a cylinder of height three over $\mathcal{C}_2^3$.
In the special case where $S_{\eta}^c = S_n^c$ the corresponding polytope is the $c$th dilate of the $n$-dimensional unit cube. 
\begin{cor}\label{Cor_colouredperm_cube}
Setting $\eta=(1,\dots, 1)$ and therefore $S_{\eta}^c=S_n^c$ in Proposition~\ref{prop_colouredmultisetperm} leads to the identity
\begin{align*}
\frac{d_{S_n^c}(t)}{(1-t)^{n+1}} = \Conv_{i=1}^n \Ehr_{\mathcal{C}_1^c} (t) = \sum_{k\geq 0} (ck+1)^n t^k = \Ehr_{c\hspace{0.05cm} \Box_n}(t).
\end{align*}
\end{cor}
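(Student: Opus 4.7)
The plan is to derive this as a direct specialisation of Proposition~\ref{prop_colouredmultisetperm}, combined with an identification of the relevant polytopes. The composition $\eta=(1,\dots,1)$ has only parts equal to $1$, so it certainly fulfils the hypothesis $\eta_i \leq 2$ of Proposition~\ref{prop_colouredmultisetperm}. Applying the proposition with $r = n$ and $l = r$ (the case where every $\eta_i = 1$) yields the first equality
\begin{align*}
\frac{d_{S_n^c}(t)}{(1-t)^{n+1}} = \Conv_{i=1}^n \Ehr_{\mathcal{C}_1^c}(t).
\end{align*}
The second equality is the remark on Hadamard products of Ehrhart series from Section~\ref{subsection:EhrhartTheory}: since each $\mathcal{C}_1^c$ is one-dimensional, the Ehrhart polynomial of the product is the product of the Ehrhart polynomials, whose generating function is the Hadamard product.

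The only content left is the last equality $\Ehr_{\prod_{i=1}^n \mathcal{C}_1^c}(t) = \Ehr_{c\Box_n}(t)$. By Definition~\ref{dfn:distorted_cp} we have
\begin{align*}
\mathcal{C}_1^c = \conv\{1,-(c-1)\} = [-(c-1),1] \subset \mathbb{R},
\end{align*}
which is a line segment of lattice length $c$. Translation by the lattice vector $(c-1,\dots,c-1)$ sends $\prod_{i=1}^n \mathcal{C}_1^c = [-(c-1),1]^n$ onto $[0,c]^n = c\Box_n$, and since Ehrhart series are invariant under lattice translations, the two Ehrhart series coincide. Alternatively, one can compute directly that $\Lp_{\mathcal{C}_1^c}(k) = ck+1 = \Lp_{[0,c]}(k)$, which already appeared in the proof of Proposition~\ref{prop_colouredmultisetperm}, and then raise this to the $n$th power to match $\Lp_{c\Box_n}(k) = (ck+1)^n$.

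No step is genuinely difficult: the proposition supplies the permutation-theoretic content and the rest is a transparent translation of polytopes. The only point worth being careful about is confirming that $\mathcal{C}_1^c$, which is not centred at the origin, is nevertheless lattice-equivalent to $[0,c]$ so that the Ehrhart series really do agree on the nose; this is handled by the translation argument above.
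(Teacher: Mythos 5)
Your proposal is correct and follows exactly the route the paper intends: the corollary is stated as an immediate specialisation of Proposition~\ref{prop_colouredmultisetperm} (with $r=n$, all $\eta_i=1$), the middle equality is the Hadamard-product remark for Ehrhart series of products, and the identification $\prod_{i=1}^n \mathcal{C}_1^c = [-(c-1),1]^n \cong c\,\Box_n$ via a lattice translation (or equivalently $\Lp_{\mathcal{C}_1^c}(k)=ck+1$, which already appears in the paper's proof of the proposition) supplies the last equality. Your care in checking that $\mathcal{C}_1^c$ is only lattice-equivalent, not equal, to $[0,c]$ is exactly the right point to verify.
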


\begin{rem}
The identity in Corollary~\ref{Cor_colouredperm_cube} appears also for other descent statistics on coloured permutations, see \cite[Section~2]{BeckBraun} and \cite[Theorem~17 and Theorem~32]{Steingrimsson}. Therefore, even if the definition of the descent statistic in Definition~\ref{dfn:descent_coloured_perm} differs from the one in \cite{BeckBraun} and \cite{Steingrimsson}, Corollary~\ref{Cor_colouredperm_cube} shows that all three statistics have the same distribution over coloured permutations.
\end{rem}

Another formula we are able to prove is the following: consider the set of coloured permutations, viz.\ $\eta = (1,\dots ,1)$, but change the colouring, such that we allow the colours $0,\dots , c_1-1$ for the letter $1$, the colours $0,\dots , c_2-1$ for $2$, and so on, where the permutation is written in one-line notation and $(\boldsymbol{c}) := (c_1,\dots , c_n) \in \mathbb{N}^n$; see, e.g.,~\cite[Section~6.2]{FerroniMcGinnis}. We denote the set of all so-called \emph{$(\boldsymbol{c})$-coloured permutations} by $S_n^{(\boldsymbol{c})}$. For instance, for $n=2$ and $(\boldsymbol{c})=(c_1,c_2)=(2,3)$ we obtain
\begin{align*}
S_2^{(2,3)}= \{ 1^02^0, 1^02^1, 1^02^2, 1^12^0, 1^12^1, 1^12^2, 2^01^0, 2^01^1, 2^01^2, 2^11^0, 2^11^1, 2^11^2 \}.
\end{align*}
In general, we can associate the set of $(\boldsymbol{c})$-coloured permutations with a product of one-dimensional distorted cross polytopes:
\begin{prop}
The descent polynomial over the set of $(\boldsymbol{c})$-coloured permutations is the $h^*$-polynomial of the $n$-dimensional hyperrectangle with edge length $c_1,\dots , c_n$. More precisely
\begin{align*}
\frac{\sum_{w \in S_n^{(\boldsymbol{c})}} t^{\des(w)}}{(1-t)^{n+1}} = \sum_{k\geq 0} \left( \prod_{i=1}^n (c_ik+1) \right) t^k = \Conv_{i=1}^n \Ehr_{\mathcal{C}_1^{c_i}} (t) = \Ehr_{\left(\mathcal{C}_1^{c_i}\right)^n}(t).
\end{align*}
\end{prop}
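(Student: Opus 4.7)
The plan is to adapt the barred-permutation argument from the proof of Theorem~\ref{MacMahonB} and Proposition~\ref{prop_colouredmultisetperm}. For a $(\boldsymbol{c})$-coloured permutation $w = w_1^{\gamma_1}\dots w_n^{\gamma_n}$ with $\gamma_i \in \{0, \dots, c_{w_i}-1\}$, a barred permutation on $w$ inserts bars into the $n+1$ spaces of $w$ subject to the requirement that every position $i \in \Des(w)$ (defined by the natural extension of Definition~\ref{dfn:descent_coloured_perm} to varying colour alphabets, using the analogous ordering where larger colour index means smaller) contains at least one bar. Denoting this set by $\mathbb{B}(w)$ and weighting each element $\beta$ by $t^{|\beta|}$ (the total number of bars), the standard bar-inflation computation gives $\sum_{\beta\in \mathbb{B}(w)} t^{|\beta|} = t^{\des(w)}/(1-t)^{n+1}$, and hence
\begin{align*}
\frac{\sum_{w\in S_n^{(\boldsymbol{c})}} t^{\des(w)}}{(1-t)^{n+1}} = \sum_{k\geq 0} |\mathbb{B}(k)| \, t^k,
\end{align*}
where $\mathbb{B}(k)$ is the set of all barred $(\boldsymbol{c})$-coloured permutations with exactly $k$ bars. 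It remains to show that $|\mathbb{B}(k)| = \prod_{i=1}^n (c_i k + 1)$, which is exactly the Ehrhart polynomial of $\prod_{i=1}^n \mathcal{C}_1^{c_i}$, since each $\mathcal{C}_1^{c_i}$ is a lattice segment of length $c_i$ whose $k$-th dilate contains $c_i k + 1$ lattice points.

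To count $|\mathbb{B}(k)|$ directly, I would fix $k$ bars arranged in a line to form $k+1$ ordered gaps $G_0, G_1, \dots, G_k$. Any element of $\mathbb{B}(k)$ is specified by choosing, for each letter $i\in [n]$, a colour $\gamma_i \in \{0, \dots, c_i-1\}$ together with a gap, and then arranging the letters inside each gap in the unique order that avoids internal descents (decreasing colour index from left to right, and increasing letter value within each colour class). By construction no internal descent arises, and any descent across a bar is automatically covered by that bar. The only remaining obstruction comes from the convention $w_0^{\gamma_0} = 0^0$: placing a positive-colour letter in the leftmost gap $G_0$ would create an uncovered descent at position $0$, so $G_0$ must contain only colour-$0$ letters.

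Computing the number of admissible (colour, gap) assignments per letter $i$ gives $(k+1) + (c_i-1)\cdot k = c_i k + 1$: the first summand counts placing a colour-$0$ copy in any of the $k+1$ gaps, the second counts placing a positive-colour copy (one of $c_i-1$ colours) in one of the $k$ interior gaps $G_1,\dots,G_k$. Because the letters are distinct and the sorted order inside each gap is determined by the (colour, value) pair, these choices are independent across $i$, yielding $|\mathbb{B}(k)| = \prod_{i=1}^n (c_i k + 1)$ as required. The main technical obstacle is the careful analysis of the boundary condition at position $0$: correctly identifying that positive-colour letters are forbidden from $G_0$ is what converts the naive count $(k+1)\prod_{i=1}^n c_i$ into the correct product $\prod_{i=1}^n (c_i k + 1)$.
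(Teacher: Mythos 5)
Your proposal is correct and follows essentially the same route as the paper, which proves this proposition by declaring it "analogous to the proof of Proposition~\ref{prop_colouredmultisetperm}": a barred-permutation argument in which the per-letter count $(k+1)+(c_i-1)k=c_ik+1$ matches the $\eta_i=1$ case of that proof with $c$ replaced by $c_i$, and $c_ik+1$ is the Ehrhart polynomial of the segment $\mathcal{C}_1^{c_i}$. Your treatment of the boundary condition at position $0$ and of the unique descent-free ordering within each gap is exactly the intended (and correct) way to make the count work.
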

\noindent The proof is analogous to the one of Theorem~\ref{MacMahonB}.

\begin{rem}
It easily follows from the definition of the descent set that $d_{S_{\eta}^c}$ is palindromic if and only if $c=2$, i.e.\ $S_{\eta}^c =B_{\eta}$ or $c=1$, i.e.\ $S_{\eta}^c =S_{\eta}$ and $\eta$ is a rectangle.
Similarly, $d_{S_n^{(\textbf{c})}}$ is palindromic if and only if $(\textbf{c})=(1,\dots ,1)$, i.e.\ $S_n^{(\textbf{c})} = S_n$. 
\end{rem}

The statement of~\eqref{eq:prop} in Proposition~\ref{prop_colouredmultisetperm} fails for larger $\eta$, even for $\eta = (3)$. Computations with SageMath~\cite{sage} show that, in general, the descent polynomial defined by \eqref{eq:des} is not an $h^*$-polynomial of a polytope.
For $\eta= (6)$ and $c=5$ the descent polynomial on coloured multiset permutations is given by 
\begin{align*}
84 t^6 + 1920 t^5 + 6685 t^4 + 5609 t^3 + 1253 t^2 + 73 t + 1.
\end{align*}
This polynomial is not an $h^*$-polynomial of a polytope $\mathcal{P}$.
Indeed, by Lemma~3.13 and Corollary~3.16 in \cite{BeckRobins} the leading coefficient of the $h^*$-polynomial $h^*(t)= h_n^* t^n + \dots + h_1^* t + h_0^*$ satisfies $h_n^* = \Lp_{\mathcal{P}^{\circ}}(1)\leq \Lp_{\mathcal{P}}(1) -n-1 = h_1^*$, which is not the case for the descent polynomial as defined in Definition~\ref{dfn:descent_coloured_perm}.\\
At least a necessary condition for Problem~\ref{Ques1} to be solved is satisfied:
a lemma in Ehrhart theory \cite[Corollary~3.21]{BeckRobins} states that the coefficients of the $h^*$-polynomial sum up to the normalized volume of the $n$-dimensional polytope, i.e.\ $h_n^*+\dots +h_0^*=~n!\,\vol(\mathcal{P})$, which leads to the following proposition:
\begin{prop}\label{prop:numerator:t=1}
For $t=1$, the numerators of both sides in \eqref{eq:ques} coincide. 
More precisely,
\begin{align*}
|S_{\eta}^c| = n!\, \vol\left(\mathcal{C}_{\eta}^c\right) = c^n \frac{n!}{\eta_1!\dots \eta_r!},
\end{align*}
where $\mathcal{C}_{\eta}^c = \mathcal{C}_{\eta_1}^c \times \dots \times \mathcal{C}_{\eta_r}^c$.
\end{prop}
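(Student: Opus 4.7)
The plan is to verify the identity by computing both sides independently in closed form and matching them. The key observation is that both sides factor neatly across the parts of the composition $\eta$, reducing the problem to a statement about a single distorted cross polytope $\mathcal{C}_m^c$.

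First I would compute the left-hand side combinatorially. A coloured multiset permutation in $S_\eta^c$ consists of a multiset permutation $w \in S_\eta$ together with a colouring $\gamma\colon [n] \to \{0,\dots,c-1\}$, and the two choices are independent. Since $|S_\eta| = \binom{n}{\eta_1,\dots,\eta_r} = \frac{n!}{\eta_1!\cdots \eta_r!}$ and there are $c^n$ colourings, we get
\[
|S_\eta^c| = \frac{n!\, c^n}{\eta_1!\cdots \eta_r!}.
\]

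Next I would compute the right-hand side, using that volume is multiplicative for products, so $\vol(\mathcal{C}_\eta^c) = \prod_{i=1}^r \vol(\mathcal{C}_{\eta_i}^c)$. The main step is then to show $\vol(\mathcal{C}_m^c) = c^m/m!$ for every $m$. My approach here is to subdivide $\mathcal{C}_m^c$ according to the $2^m$ coordinate orthants. For each sign vector $\epsilon \in \{+1,-1\}^m$, the intersection of $\mathcal{C}_m^c$ with the orthant $\{x : \epsilon_i x_i \ge 0 \text{ for all } i\}$ is the simplex with vertices $0$ and, for each $i$, either $e_i$ (if $\epsilon_i = +1$) or $-(c-1)e_i$ (if $\epsilon_i = -1$). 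Such a simplex has volume $\frac{1}{m!}\prod_{i=1}^m a_i^{\epsilon_i}$ with $a_i^{+1} = 1$ and $a_i^{-1} = c-1$. Summing over all $\epsilon$ gives
\[
\vol(\mathcal{C}_m^c) = \frac{1}{m!}\prod_{i=1}^m \bigl(1 + (c-1)\bigr) = \frac{c^m}{m!}.
\]

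Combining these, the right-hand side becomes
\[
n!\,\vol(\mathcal{C}_\eta^c) = n!\prod_{i=1}^r \frac{c^{\eta_i}}{\eta_i!} = \frac{n!\,c^n}{\eta_1!\cdots\eta_r!},
\]
which matches the count of $|S_\eta^c|$ and completes the proof. I do not anticipate any real obstacle: the orthant subdivision of $\mathcal{C}_m^c$ is clean because the vertices all lie on coordinate axes, so the only mildly technical point is verifying that the orthant pieces do indeed form a dissection (interiors disjoint, union equal to $\mathcal{C}_m^c$), which is immediate from the $H$-description $\sum_i \max\{x_i, -x_i/(c-1)\} \le 1$.
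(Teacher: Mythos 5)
Your proposal is correct and follows essentially the same route as the paper: both compute $|S_{\eta}^c| = c^n\, n!/(\eta_1!\cdots\eta_r!)$ directly, reduce the volume computation to a single factor by multiplicativity, and dissect $\mathcal{C}_m^c$ into the $2^m$ orthant simplices with apex at the origin, each of volume $\frac{1}{m!}(c-1)^{|S|}$ for the subset $S$ of negated coordinates. The only cosmetic difference is that the paper groups the orthants by $|S|=l$ and invokes the binomial theorem $\sum_l \binom{m}{l}(c-1)^l = c^m$, whereas you write the same sum as the product $\prod_i\bigl(1+(c-1)\bigr)$.
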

\begin{proof}
Clearly, $|S_{\eta}^c| = c^n \cdot |S_{\eta}|  = c^n \frac{n!}{\eta_1!\dots \eta_r!}$.
On the other side, $\mathcal{C}_{\eta_i}^c$ consists of $2^{\eta_i}$ simplices.
Let $S$ be an $l$-element subset of $[\eta_i]$, $1\leq l \leq \eta_i$.
There exists a simplex in $\mathcal{C}_{\eta_i}^c$ which is the convex hull of the union $\{e_i : i\in [\eta_i]\setminus S\} \cup \{ (c-1)(-e_i) : i\in S \}$ with volume $\frac{(c-1)^l}{n!}$.
For each $l$ there are $\binom{\eta_i}{l}$ simplices with volume $\frac{(c-1)^l}{n!}$. 
Thus
\begin{align*}
\vol(\mathcal{C}_{\eta_i}^c) = \sum_{l=0}^{\eta_i} \binom{\eta_i}{l} \frac{(c-1)^l}{\eta_i!} = \frac{c^{\eta_i}}{\eta_i!}.
\end{align*}
For the product $\mathcal{C}_{\eta}^c$ we obtain
\begin{align*}
\vol(\mathcal{C}_{\eta}^c) = \prod_{i=0}^r \frac{c^{\eta_i}}{\eta_i!} = \frac{c^n}{\eta_1! \dots \eta_r!}
\end{align*}
and therefore,
\begin{align*}
|S_{\eta}^c| = c^n \frac{n!}{\eta_1!\dots \eta_r!} = n! \vol\left(\mathcal{C}_{\eta}^c\right)
\end{align*}
as required.
\end{proof}
Proposition~\ref{prop:numerator:t=1} gives a hint that in Problem~\ref{Ques1} the distorted cross polytope might be the right polytope on the right hand side but the statistic on the left hand side might not be the one we are looking for.

\acknowledgements{The paper is part of the author's dissertation, supervised by Christopher Voll.
We would like to thank Angela Carnevale, Johannes Krah, and Christopher Voll for helpful discussions.\\
This version of the article has been accepted for publication after peer review but is not the Version of Record and does not reflect post-acceptance improvements, or any corrections. The Version of Record is available online at: \mbox{http://dx.doi.org/10.1007/s00454-025-00731-8}.}

\printbibliography

\newpage
\newgeometry{width=150mm,top=30mm,bottom=31mm,marginparwidth=2.5cm}
\appendix
\section{Generalised Eulerian polynomials of type~$\mathsf{A}$}\label{Liste_EulerianPolynomialsA}
\small
We list the generalised Eulerian polynomials of type~$\mathsf{A}$ for partitions $\eta = (\eta_1,\dots ,\eta_r)$ with $n=\sum_{i=1}^r \eta_i \leq 8$, see Definition~\ref{dfn:descentPolynomialS_eta}.
\scriptsize
\begin{longtable}{ccc}
\centering
$n$ &  $\eta$ & $d_{S_{\eta}}(t)$ \\
 \hline \hline
\vspace{0.1cm}$2$  & $(1,1)$ & $t + 1$ \\ 
$3$ & $(1,2)$ & $2 t + 1$ \\
\vspace{0.1cm} & $(1,1,1)$ & $t^2 + 4 t + 1$ \\ 
$4$ 
 & $(1,3)$ & $3 t + 1$ \\
 & $(2,2)$ & $t^2 + 4 t + 1$ \\ 
 & $(1,1,2)$ & $4 t^2 + 7 t + 1$ \\
\vspace{0.1cm} & $(1,1,1,1)$ & $t^3 + 11 t^2 + 11 t + 1$ \\
$5$ 
 & $(1,4)$ & $4 t + 1$ \\
 & $(2,3)$ & $3 t^2 + 6 t + 1$ \\
 & $(1,1,3)$ & $9 t^2 + 10 t + 1$ \\
 & $(1,2,2)$ & $2 t^3 + 15 t^2 + 12 t + 1$ \\
 & $(1,1,1,2)$ & $8 t^3 + 33 t^2 + 18 t + 1$ \\
\vspace{0.1cm} & $(1,1,1,1,1)$ & $t^4 + 26 t^3 + 66 t^2 + 26 t + 1$ \\
$6$ 
 & $(1,5)$ & $5 t + 1$ \\
 & $(2,4)$ & $6 t^2 + 8 t + 1$ \\
 & $(3,3)$ & $t^3 + 9 t^2 + 9 t + 1$ \\
 & $(1,1,4)$ & $16 t^2 + 13 t + 1$ \\
 & $(1,2,3)$ & $9 t^3 + 33 t^2 + 17 t + 1$ \\
 & $(2,2,2)$ & $t^4 + 20 t^3 + 48 t^2 + 20 t + 1$ \\
 & $(1,1,1,3)$ & $27 t^3 + 67 t^2 + 25 t + 1$ \\
 & $(1,1,2,2)$ & $4 t^4 + 53 t^3 + 93 t^2 + 29 t + 1$ \\
 & $(1,1,1,1,2)$ & $16 t^4 + 131 t^3 + 171 t^2 + 41 t + 1$ \\ 
\vspace{0.1cm} & $(1,1,1,1,1,1)$ & $t^5 + 57 t^4 + 302 t^3 + 302 t^2 + 57 t + 1$ \\ 
$7$ 
 & $(1,6)$ & $6 t + 1$ \\
 & $(2,5)$ & $10 t^2 + 10 t + 1$ \\
 & $(3,4)$ & $4 t^3 + 18 t^2 + 12 t + 1$ \\
 & $(1,1,5)$ & $25 t^2 + 16 t + 1$ \\
 & $(1,2,4)$ & $24 t^3 + 58 t^2 + 22 t + 1$ \\
 & $(1,3,3)$ & $3 t^4 + 40 t^3 + 72 t^2 + 24 t + 1$ \\
 & $(2,2,3)$ & $9 t^4 + 72 t^3 + 100 t^2 + 28 t + 1$ \\
 & $(1,1,1,4)$ & $64 t^3 + 113 t^2 + 32 t + 1$ \\
 & $(1,1,2,3)$ & $27 t^4 + 168 t^3 + 184 t^2 + 40 t + 1$ \\
 & $(1,2,2,2)$ & $2 t^5 + 65 t^4 + 272 t^3 + 244 t^2 + 46 t + 1$ \\
 & $(1,1,1,1,3)$ & $81 t^4 + 376 t^3 + 326 t^2 + 56 t + 1$ \\ 
 & $(1,1,1,2,2)$ & $8 t^5 + 179 t^4 + 584 t^3 + 424 t^2 + 64 t + 1$ \\
 & $(1,1,1,1,1,2)$ & $32 t^5 + 473 t^4 + 1208 t^3 + 718 t^2 + 88 t + 1$ \\ 
\vspace{0.1cm} & $(1,1,1,1,1,1,1)$ & $t^6 + 120 t^5 + 1191 t^4 + 2416 t^3 + 1191 t^2 + 120 t + 1$ \\
$8$ & $(1,7)$ & $7 t + 1$ \\
 & $(2,6)$ & $15 t^2 + 12 t + 1$ \\
 & $(3,5)$ & $10 t^3 + 30 t^2 + 15 t + 1$ \\
 & $(4,4)$ & $t^4 + 16 t^3 + 36 t^2 + 16 t + 1$ \\
 & $(1,1,6)$ & $36 t^2 + 19 t + 1$ \\
 & $(1,2,5)$ & $50 t^3 + 90 t^2 + 27 t + 1$ \\
 & $(1,3,4)$ & $16 t^4 + 106 t^3 + 126 t^2 + 31 t + 1$ \\
 & $(2,2,4)$ & $36 t^4 + 176 t^3 + 171 t^2 + 36 t + 1$ \\
 & $(2,3,3)$ & $3 t^5 + 69 t^4 + 244 t^3 + 204 t^2 + 39 t + 1$ \\
 & $(1,1,1,5)$ & $125 t^3 + 171 t^2 + 39 t + 1$ \\
 & $(1,1,2,4)$ & $96 t^4 + 386 t^3 + 306 t^2 + 51 t + 1$ \\
 & $(1,1,3,3)$ & $9 t^5 + 175 t^4 + 520 t^3 + 360 t^2 + 55 t + 1$ \\
 & $(1,2,2,3)$ & $27 t^5 + 333 t^4 + 788 t^3 + 468 t^2 + 63 t + 1$ \\
 & $(2,2,2,2)$ & $t^6 + 72 t^5 + 603 t^4 + 1168 t^3 + 603 t^2 + 72 t + 1$ \\
 & $(1,1,1,1,4)$ & $256 t^4 + 821 t^3 + 531 t^2 + 71 t + 1$ \\ 
 & $(1,1,1,2,3)$ & $81 t^5 + 807 t^4 + 1592 t^3 + 792 t^2 + 87 t + 1$ \\
 & $(1,1,2,2,2)$ & $4 t^6 + 207 t^5 + 1413 t^4 + 2308 t^3 + 1008 t^2 + 99 t + 1$ \\
 & $(1,1,1,1,1,3)$ & $243 t^5 + 1909 t^4 + 3134 t^3 + 1314 t^2 + 119 t + 1$ \\
 & $(1,1,1,1,2,2)$ & $16 t^6 + 585 t^5 + 3231 t^4 + 4456 t^3 + 1656 t^2 + 135 t + 1$ \\ 
 & $(1,1,1,1,1,1,2)$ & $64 t^6 + 1611 t^5 + 7197 t^4 + 8422 t^3 + 2682 t^2 + 183 t + 1$ \\
\vspace{0.1cm} & $(1,1,1,1,1,1,1,1)$ & $t^7 + 247 t^6 + 4293 t^5 + 15619 t^4 + 15619 t^3 + 4293 t^2 + 247 t + 1$\\
 $n$ & $(n)$ & $1$
\end{longtable}
\normalsize
\restoregeometry
\section{Generalised Eulerian polynomials of type~$\mathsf{B}$ and 
joint distributions of major index and descent over $B_{\eta}$}\label{Liste_EulerianPolynomialsB}

\small
Recall the definitions of the generalised Eulerian polynomials of type~$\mathsf{B}$ and the generating polynomial of the joint distribution of major index and descent statistic over $B_{\eta}$, see Definitions~\ref{dfn:descentPolynomialB_eta} and~\ref{dfn:Carlitz-q-Eulerian-polynomialB_eta}.
We list these polynomials for partitions $\eta = (\eta_1,\dots ,\eta_r)$ with $n=\sum_{i=1}^r \eta_i \leq 8$ and $n\leq 5$, respectively.

\scriptsize
\begin{longtable}{ccc}
\centering
$n$ &  $\eta$ & $d_{B_{\eta}}(t)$ \\
 \hline \hline
\vspace{0.1cm}$1$ & $(1)$ & $t+1$ \\
$2$ & $(2)$ & $t^2 + 2 t + 1$ \\
\vspace{0.1cm} & $(1,1)$ & $t^2 + 6 t + 1$ \\ 
$3$ & $(3)$ & $t^3 + 3 t^2 + 3 t + 1$ \\
 & $(1,2)$ & $t^3 + 11 t^2 + 11 t + 1$ \\
\vspace{0.1cm} & $(1,1,1)$ & $t^3 + 23 t^2 + 23 t + 1$ \\ 
$4$ & $(4)$ & $t^4 + 4 t^3 + 6 t^2 + 4 t + 1$ \\
 & $(1,3)$ & $t^4 + 16 t^3 + 30 t^2 + 16 t + 1$ \\
 & $(2,2)$ & $t^4 + 20 t^3 + 54 t^2 + 20 t + 1$ \\ 
 & $(1,1,2)$ & $t^4 + 40 t^3 + 110 t^2 + 40 t + 1$ \\
\vspace{0.1cm} & $(1,1,1,1)$ & $t^4 + 76 t^3 + 230 t^2 + 76 t + 1$ \\
$5$ & $(5)$ & $t^5 + 5 t^4 + 10 t^3 + 10 t^2 + 5 t + 1$ \\ 
 & $(1,4)$ & $t^5 + 21 t^4 + 58 t^3 + 58 t^2 + 21 t + 1$ \\
 & $(2,3)$ & $t^5 + 29 t^4 + 130 t^3 + 130 t^2 + 29 t + 1$ \\
 & $(1,1,3)$ & $t^5 + 57 t^4 + 262 t^3 + 262 t^2 + 57 t + 1$ \\
 & $(1,2,2)$ & $t^5 + 69 t^4 + 410 t^3 + 410 t^2 + 69 t + 1$ \\
 & $(1,1,1,2)$ & $t^5 + 129 t^4 + 830 t^3 + 830 t^2 + 129 t + 1$ \\
\vspace{0.1cm} & $(1,1,1,1,1)$ & $t^5 + 237 t^4 + 1682 t^3 + 1682 t^2 + 237 t + 1$ \\
$6$ & $(6)$ & $t^6 + 6 t^5 + 15 t^4 + 20 t^3 + 15 t^2 + 6 t + 1$ \\ 
 & $(1,5)$ & $t^6 + 26 t^5 + 95 t^4 + 140 t^3 + 95 t^2 + 26 t + 1$ \\
 & $(2,4)$ & $t^6 + 38 t^5 + 239 t^4 + 404 t^3 + 239 t^2 + 38 t + 1$ \\
 & $(3,3)$ & $t^6 + 42 t^5 + 303 t^4 + 588 t^3 + 303 t^2 + 42 t + 1$ \\
 & $(1,1,4)$ & $t^6 + 74 t^5 + 479 t^4 + 812 t^3 + 479 t^2 + 74 t + 1$ \\
 & $(1,2,3)$ & $t^6 + 98 t^5 + 911 t^4 + 1820 t^3 + 911 t^2 + 98 t + 1$ \\
 & $(2,2,2)$ & $t^6 + 118 t^5 + 1343 t^4 + 2836 t^3 + 1343 t^2 + 118 t + 1$ \\
 & $(1,1,1,3)$ & $t^6 + 182 t^5 + 1823 t^4 + 3668 t^3 + 1823 t^2 + 182 t + 1$ \\
 & $(1,1,2,2)$ & $t^6 + 218 t^5 + 2671 t^4 + 5740 t^3 + 2671 t^2 + 218 t + 1$ \\
 & $(1,1,1,1,2)$ & $t^6 + 398 t^5 + 5311 t^4 + 11620 t^3 + 5311 t^2 + 398 t + 1$ \\ 
\vspace{0.1cm} & $(1,1,1,1,1,1)$ & $t^6 + 722 t^5 + 10543 t^4 + 23548 t^3 + 10543 t^2 + 722 t + 1$ \\ 
$7$ & $(7)$ & $t^7 + 7 t^6 + 21 t^5 + 35 t^4 + 35 t^3 + 21 t^2 + 7 t + 1$ \\ 
 & $(1,6)$ & $t^7 + 31 t^6 + 141 t^5 + 275 t^4 + 275 t^3 + 141 t^2 + 31 t + 1$ \\
 & $(2,5)$ & $t^7 + 47 t^6 + 381 t^5 + 915 t^4 + 915 t^3 + 381 t^2 + 47 t + 1$ \\
 & $(3,4)$ & $t^7 + 55 t^6 + 549 t^5 + 1635 t^4 + 1635 t^3 + 549 t^2 + 55 t + 1$ \\
 & $(1,1,5)$ & $t^7 + 91 t^6 + 761 t^5 + 1835 t^4 + 1835 t^3 + 761 t^2 + 91 t + 1$ \\
 & $(1,2,4)$ & $t^7 + 127 t^6 + 1613 t^5 + 4979 t^4 + 4979 t^3 + 1613 t^2 + 127 t + 1$ \\
 & $(1,3,3)$ & $t^7 + 139 t^6 + 1977 t^5 + 6843 t^4 + 6843 t^3 + 1977 t^2 + 139 t + 1$ \\
 & $(2,2,3)$ & $t^7 + 167 t^6 + 2853 t^5 + 10419 t^4 + 10419 t^3 + 2853 t^2 + 167 t + 1$ \\
 & $(1,1,1,4)$ & $t^7 + 235 t^6 + 3209 t^5 + 9995 t^4 + 9995 t^3 + 3209 t^2 + 235 t + 1$ \\
 & $(1,1,2,3)$ & $t^7 + 307 t^6 + 5633 t^5 + 20939 t^4 + 20939 t^3 + 5633 t^2 + 307 t + 1$ \\
 & $(1,2,2,2)$ & $t^7 + 367 t^6 + 8013 t^5 + 31939 t^4 + 31939 t^3 + 8013 t^2 + 367 t + 1$ \\
 & $(1,1,1,1,3)$ & $t^7 + 559 t^6 + 11117 t^5 + 42083 t^4 + 42083 t^3 + 11117 t^2 + 559 t + 1$ \\ 
 & $(1,1,1,2,2)$ & $t^7 + 667 t^6 + 15753 t^5 + 64219 t^4 + 64219 t^3 + 15753 t^2 + 667 t + 1$ \\
 & $(1,1,1,1,1,2)$ & $t^7 + 1207 t^6 + 30933 t^5 + 129139 t^4 + 129139 t^3 + 30933 t^2 + 1207 t + 1$ \\ 
\vspace{0.1cm} & $(1,1,1,1,1,1,1)$ & $t^7 + 2179 t^6 + 60657 t^5 + 259723 t^4 + 259723 t^3 + 60657 t^2 + 2179 t + 1$ \\
$8$ & $(8)$ & $t^8 + 8 t^7 + 28 t^6 + 56 t^5 + 70 t^4 + 56 t^3 + 28 t^2 + 8 t + 1$ \\ 
 & $(1,7)$ & $t^8 + 36 t^7 + 196 t^6 + 476 t^5 + 630 t^4 + 476 t^3 + 196 t^2 + 36 t + 1$ \\
 & $(2,6)$ & $t^8 + 56 t^7 + 556 t^6 + 1736 t^5 + 2470 t^4 + 1736 t^3 + 556 t^2 + 56 t + 1$ \\
 & $(3,5)$ & $t^8 + 68 t^7 + 868 t^6 + 3516 t^5 + 5430 t^4 + 3516 t^3 + 868 t^2 + 68 t + 1$ \\
 & $(4,4)$ & $t^8 + 72 t^7 + 988 t^6 + 4344 t^5 + 7110 t^4 + 4344 t^3 + 988 t^2 + 72 t + 1$ \\
 & $(1,1,6)$ & $t^8 + 108 t^7 + 1108 t^6 + 3476 t^5 + 4950 t^4 + 3476 t^3 + 1108 t^2 + 108 t + 1$ \\
 & $(1,2,5)$ & $t^8 + 156 t^7 + 2516 t^6 + 10596 t^5 + 16470 t^4 + 10596 t^3 + 2516 t^2 + 156 t + 1$ \\
 & $(1,3,4)$ & $t^8 + 180 t^7 + 3460 t^6 + 17484 t^5 + 29430 t^4 + 17484 t^3 + 3460 t^2 + 180 t + 1$ \\
 & $(2,2,4)$ & $t^8 + 216 t^7 + 4940 t^6 + 26280 t^5 + 44646 t^4 + 26280 t^3 + 4940 t^2 + 216 t + 1$ \\
 & $(2,3,3)$ & $t^8 + 236 t^7 + 5956 t^6 + 34836 t^5 + 61302 t^4 + 34836 t^3 + 5956 t^2 + 236 t + 1$ \\
 & $(1,1,1,5)$ & $t^8 + 288 t^7 + 4988 t^6 + 21216 t^5 + 33030 t^4 + 21216 t^3 + 4988 t^2 + 288 t + 1$ \\
 & $(1,1,2,4)$ & $t^8 + 396 t^7 + 9716 t^6 + 52596 t^5 + 89622 t^4 + 52596 t^3 + 9716 t^2 + 396 t + 1$ \\
 & $(1,1,3,3)$ & $t^8 + 432 t^7 + 11692 t^6 + 69648 t^5 + 123174 t^4 + 69648 t^3 + 11692 t^2 + 432 t + 1$ \\
 & $(1,2,2,3)$ & $t^8 + 516 t^7 + 16436 t^6 + 104316 t^5 + 187542 t^4 + 104316 t^3 + 16436 t^2 + 516 t + 1$ \\
 & $(2,2,2,2)$ & $t^8 + 616 t^7 + 22972 t^6 + 155992 t^5 + 285958 t^4 + 155992 t^3 + 22972 t^2 + 616 t + 1$ \\
 & $(1,1,1,1,4)$ & $t^8 + 720 t^7 + 19100 t^6 + 105264 t^5 + 179910 t^4 + 105264 t^3 + 19100 t^2 + 720 t + 1$ \\ 
 & $(1,1,1,2,3)$ & $t^8 + 936 t^7 + 32156 t^6 + 208536 t^5 + 376902 t^4 + 208536 t^3 + 32156 t^2 + 936 t + 1$ \\
 & $(1,1,2,2,2)$ & $t^8 + 1116 t^7 + 44836 t^6 + 311716 t^5 + 574902 t^4 + 311716 t^3 + 44836 t^2 + 1116 t + 1$ \\
 & $(1,1,1,1,1,3)$ & $t^8 + 1692 t^7 + 62852 t^6 + 416868 t^5 + 757494 t^4 + 416868 t^3 + 62852 t^2 + 1692 t + 1$ \\
 & $(1,1,1,1,2,2)$ & $t^8 + 2016 t^7 + 87436 t^6 + 622816 t^5 + 1155942 t^4 + 622816 t^3 + 87436 t^2 + 2016 t + 1$ \\ 
 & $(1,1,1,1,1,1,2)$ & $t^8 + 3636 t^7 + 170356 t^6 + 1244236 t^5 + 2324502 t^4 + 1244236 t^3 + 170356 t^2 + 3636 t + 1$ \\
 & $(1,1,1,1,1,1,1,1)$ & $t^8 + 6552 t^7 + 331612 t^6 + 2485288 t^5 + 4675014 t^4 + 2485288 t^3 + 331612 t^2 + 6552 t + 1$ \\
& & \\
$n$ &  $\eta$ & $C_{B_{\eta}}(q,t)$ \\
 \hline \hline
\vspace{0.1cm}$1$ & $(1)$ & $t + 1$ \\
$2$ & $(2)$ & $q t^2 + (q+1) t + 1$ \\
\vspace{0.1cm} & $(1,1)$ & $q t^2 + (3 q + 3) t + 1$ \\
$3$ & $(3)$ & $q^3 t^3 + (q^3 + q^2 + q) t^2 + (q^2 + q + 1) t + 1$ \\
 & $(1,2)$ & $q^3 t^3 + (3q^3+5q^2+3q)t^2 + (3q^2+5q+3) t + 1$ \\
\vspace{0.1cm} & $(1,1,1)$ & $q^3 t^3 + (7q^3 +11q^2 +5q) t^2 + (5q^2 +11q +7) t + 1$ \\
$4$ & $(4)$ & $q^6 t^4 + (q^6 +q^5 +q^4 +q^3) t^3 + (q^5 +q^4 +2 q^3  +q^2 +q) t^2$ \\
 & & $+ (q^3 +q^2  +q +1) t + 1$ \\
 & $(1,3)$ & $q^6 t^4 + (3q^6 +5q^5 +5q^4 +3q^3) t^3 + (3q^5 +7q^4 +10q^3 +7q^2 +3q) t^2$\\
 & & $+ (3 q^3 +5 q^2 +5 q +3) t + 1$ \\
 & $(2,2)$ & $q^6 t^4 + (3q^6 +7q^5 +7q^4 +3q^3) t^3 + (5q^5 +13q^4 +18q^3 +13q^2 +5q) t^2 $\\
 & & $+ (3q^3 +7q^2 +7q +3) t + 1$ \\
 & $(1,1,2)$ & $q^6t^4 +(7q^6 +15q^5 +13q^4 +5q^3) t^3 + (9q^5 +27q^4 + 38q^3 +27q^2+ 9q) t^2 $ \\
 & & $ +(5q^3 +13q^2 +15q +7) t + 1$ \\
 & $(1,1,1,1)$ & $q^6 t^4 + (15q^6 +31q^5 +23q^4 +7q^3) t^3 + (17q^5 +57q^4 +82q^3 +57q^2 +17q) t^2 $\\
\vspace{0.1cm} & & $ + (7q^3 +23q^2 +31q +15)t + 1$ \\
 $5$ & $(5)$ & $q^{10} t^5 +(q^{10} +q^9 +q^8 +q^7 +q^6) t^4$\\
 & & $ +(q^9 +q^8 +2q^7 +2q^6 +2q^5 +q^4 +q^3) t^3 $\\
 & & $+ (q^7 +q^6 +2q^5 +2q^4 +2q^3 +q^2 +q) t^2$ \\
 & & $ + (q^4 +q^3 +q^2 +q +1) t + 1$ \\ 
 & $(1,4)$ & $q^{10} t^5 +(3q^{10} +5q^9 +5q^8 +5q^7 +3q^6) t^4$\\
 & & $ + (3q^9 +7q^8 +12q^7 +14q^6 +12q^5 +7q^4 +3q^3) t^3$\\
 & & $ +(3q^7 +7q^6 +12q^5 +14q^4 +12q^3 +7q^2 +3q) t^2 $\\
 & & $+ (3q^4 +5q^3 +5q^2 +5q +3) t + 1$ \\
 & $(2,3)$ & $q^{10} t^5 +(3q^{10} +7q^9 +9q^8 +7q^7  +3q^6) t^4 $\\
 & & $+ (5q^9 +15q^8 +28q^7 +34q^6 +28q^5 +15q^4 +5q^3) t^3 $\\
 & & $+ (5q^7 +15q^6 +28q^5 +34q^4 +28q^3 +15q^2 +5q) t^2 $\\
 & & $+ (3q^4 +7q^3 +9q^2 +7q +3) t + 1$ \\
 & $(1,1,3)$ & $q^{10} t^5 + (7q^{10} +15q^9 +17q^8 +13q^7 +5q^6) t^4 $\\
 & & $ + (9q^9 +31q^8 +58q^7 +70q^6 +56q^5 +29q^4 +9q^3) t^3$ \\
 & & $ + (9q^7 +29q^6 +56q^5 +70q^4 +58q^3 +31q^2 +9q) t^2$\\
 & & $ + (5q^4 +13q^3 +17q^2 +15q +7) t + 1$ \\
 & $(1,2,2)$ & $q^{10} t^5 + (7q^{10} +19q^9 +23q^8 +15q^7 +5q^6) t^4 $\\
 & & $+ (13q^9 +49q^8 +94q^7 +112q^6 +88q^5 +43q^4 +11q^3) t^3$ \\
 & & $ + (11q^7 +43q^6 +88q^5 +112q^4 +94q^3 +49q^2 +13q) t^2 $\\
 & & $ + (5q^4 +15q^3 +23q^2 +19q +7) t + 1$ \\
 & $(1,1,1,2)$ & $q^{10} t^5 + (15q^{10} +39q^9 +43q^8 +25q^7 +7q^6) t^4 $\\
 & & $ + (25q^9 + 101q^8 +196q^7 +232q^6 +176q^5 +81q^4 +19q^3) t^3$ \\
 & & $ + (19q^7 +81q^6 +176q^5 +232q^4 +196q^3 +101q^2 +25q) t^2 $\\
 & & $+ (7q^4 +25q^3  +43q^2 +39q +15) t + 1$ \\
 & $(1,1,1,1,1)$ & $q^{10} t^5 + (31q^{10} +79q^9 +79q^8 +39q^7 +9q^6) t^4$\\
 & & $ + (49q^9 +209q^8 +410q^7 +480q^6 +352q^5 +151q^4 +31q^3) t^3$\\
 & & $ + (31q^7 +151q^6 +352q^5 +480q^4 +410q^3 +209q^2 +49q) t^2$\\
 & & $ + (9q^4 +39q^3 +79q^2 +79q +31) t + 1$ 
 \end{longtable}
 \normalsize
 \vspace{-0.3cm}

\end{document}